\tikzset{->,
>=stealth,
node distance=3cm}
\theoremstyle{plain}
\newtheorem{thm}{Theorem}[section]
\newtheorem{cor}[thm]{Corollary}
\newtheorem{lem}[thm]{Lemma}
\newtheorem{ques}[thm]{Question}
\newtheorem{prob}[thm]{Problem}
\def\bbb{\mathbb}
\def\op{\operatorname}
\renewcommand{\phi}{\varphi}
\newcommand{\N}{\bbb{N}}
\newcommand{\Z}{\bbb{Z}}
\newcommand{\Q}{\bbb{Q}}
\newcommand{\C}{\bbb{C}}
\let\@@pmod\pmod
\DeclareRobustCommand{\pmod}{\@ifstar\@pmods\@@pmod}
\def\@pmods#1{\mkern4mu({\operator@font mod}\mkern 6mu#1)}
\begin{document}

\title[Solutions of certain meta-Fibonacci recurrences]{Solutions of certain meta-Fibonacci recurrences}
\author{Bartosz Sobolewski and Maciej Ulas}

\keywords{meta-Fibonacci sequence, recurrence sequence, automatic sequence, binary partitions}
\subjclass[2010]{}
\thanks{The research of the authors is supported by the grant of the National Science Centre (NCN), Poland, no. UMO-2019/34/E/ST1/00094}

\begin{abstract}
In this note we investigate the solutions of certain meta-Fibonacci recurrences of the form $f(n)=f(n-f(n-1))+f(n-2)$ for various sets of initial conditions. In the case when $f(n)=1$ for $n\leq 1$, we prove that the resulting integer sequence is closely related to the function counting binary partitions of a certain type.
\end{abstract}

\maketitle

\section{Introduction}\label{sec1}
The Fibonacci sequence $(F_{n})_{n\in\N}$, defined as $F_{0}=0, F_{1}=1$ and $F_{n}=F_{n-1}+F_{n-2}, n\geq 2$, has many diverse applications and is still investigated from different points of view. In the literature, there are many various generalizations of this sequence, many of which can be seen as subclasses of the class of so-called {\it meta-Fibonacci sequences}. What is a meta-Fibonacci sequence? In the most general terms, it is a solution of a recurrence relation of the form
\begin{equation}\label{genmeta}
f(n)=f(p(n))+f(q(n)),
\end{equation}
where $p(n), q(n)$ are certain expressions involving $n, f(n-1), \ldots, f(n-k)$ for some $k$. To get a sequence, we also need to specify certain initial conditions $f(1),\ldots, f(i)$ (where $i$ depends on $p, q$). We note that the class of sequences defined in this way is larger than the one introduced by Conolly in \cite{Con}.

The class of meta-Fibonacci sequences is very broad and (trivially) contains the Fibonacci sequence (take $p(n)=n-1, q(n)=n-2$), the sequence of powers of $2$ (take $p(n)=q(n)=n-1$ with $f(1)=1$), but also many exotic sequences. One striking example is the Hofstadter sequence, which is defined in the following way: $Q(1)=Q(2)=1$ and for $n\geq 3$ we have
$$
Q(n)=Q(n-Q(n-1))+Q(n-Q(n-2)).
$$
This sequence first appeared in the book \cite{Hof} and since its invention has attracted much attention.

In contrast to usual linear recurrence sequences, for certain initial conditions, the corresponding meta-Fibonacci sequence may not be well defined for all $n \in\N_+$. Indeed, if for some $n$ we have $p(n)\leq 0$ or $q(n)\leq 0$ then the term with index $n$ can not be computed. Then we say that the sequence {\it dies}. We get a typical example for $p(n)=n-f(n-1), q(n)=n-f(n-2)$ with initial conditions $f(1)=2, f(2)=1$. Then $f(3)=3, f(4)=5$ but we can not compute the value of $f(5)$  because $5-f(4)=0$ and $f(0)$ is not defined. The question if a given meta-Fibonacci sequence dies is in general hard to answer. For example, it is still not known whether the Hofstadter sequence $Q$ will live forever or die at some $n$. A common belief is that the sequence is well defined for each $n \in \N_+$, which is supported by calculations for $n\leq10^{10}$. In order to eliminate the problem with dying sequences, one can use the convention that the values at negative indices are fixed. For example, Ruskey \cite{Rus} discovered a solution to Hofstadter’s $Q$-recurrence (with appropriate initial conditions and the convention that $Q(n)=0$ for $n<0$) that includes the Fibonacci along indexes lying in certain arithmetic progression.

An interesting variant of the Hofstadter sequence with perfectly predictable behavior was proposed by Tanny \cite{Tan}. The sequence starts with $T(0)=T(1)=T(2)=1$ and for $n\geq 3$ we have
$$ T(n)=T(n-1-T(n-1))+T(n-2-T(n-2)). $$
Tanny proved that this sequence is {\it slow}, well-defined for all $n\in\N$ and each positive integer appears as a value of $T(n)$. We say that a sequence $(f(n))_{n\in\N}$ is slow if for each $n\in\N$ we have $f(n+1)-f(n)\in\{0,1\}$. Some other constructions of slow meta-Fibonacci sequences were presented in the literature (see for example \cite{IL}).

Browsing the literature concerning meta-Fibonacci recurrences, we have observed that in every case studied so far both expressions $p(n), q(n)$ used in the definition of the recurrence (\ref{genmeta}) contain at least one term of the form $f(n-k)$ for some $k$. Moreover, in each case when the sequence is well-understood, there is some $a\in\N_{+}$ such that for each $b\in\{0,\ldots,a-1\}$, the subsequence $(f(an+b))_{n\in\N}$ is bounded from above by a linear function in $n$ or is governed by a specific linear recurrence sequence with exponential growth (with algebraic base).

This motivated us to ask the following question. Consider the recurrence (\ref{genmeta}) with $p(n)=n-f(n-u)$ and $q(n)=n-v$, where $u, v$ are fixed positive integers, namely
\begin{equation}\label{genmeta1}
f(n) = f(n- f(n-u))+f(n-v).
\end{equation}

\begin{ques} \label{main_question}
Can we expect the existence of a solution such that at least one of its subsequences along some arithmetic progression is not a linear recurrence sequence, neither is slow, nor unpredictable (chaotic)?
\end{ques}



In general, any initial condition can be considered $f(0),f(1), \ldots, f(u-1) \in \N_{+}$. The assumption that these values are positive ensures that $n-f(n-u)<n$, which is necessary to compute $f(n)$ recursively. To avoid the situation in which the resulting sequence dies, we assign a fixed value $c \in \N$ to terms with indices $n < 0$.

The case $v=1$ is rather uninteresting in view of Question \ref{main_question}. Indeed, if $c \geq 1$, then for any $u \geq 1$ the resulting sequence $(f(n))_{n\in\N}$ is eventually an arithmetic progression. To see this, note that the recurrence relation \eqref{genmeta1} implies that this sequence is strictly increasing. If $f(n-u) \leq n$ for all $n \geq u$, then we must have $f(n) = f(n-1)+1$ for sufficiently large $n$. On the other hand, if $f(n_0-u) > n_0$ for some $n_0 \in \N$, then it is easy to inductively prove that $f(n) = f(n-1) + c$ for all $n\geq n_0$.
The description for $c=0$ is more complicated; however, experimental computations strongly indicate that there exists some $d \geq 1$ (depending on the initial conditions $f(0),f(1), \ldots, f(u-1)$) such that for $e=0,1,\ldots,d-1$ the subsequences $(f(dn+e))_{n\in\N}$ are eventually arithmetic progressions.

Based on the above discussion, we investigate the problem stated for the next ``smallest'' case, namely $u=1, v=2$. More precisely, let $a, b\in\N_+$ and consider the sequence ${\bf h}_{a,b}=(h_{a,b}(n))_{n\in\N}$, where
$$
h_{a,b}(n)=\begin{cases}\begin{array}{lcc}
                          a &  & n\leq 0, \\
                          b &  & n=1,\\
                          h_{a,b}(n-h_{a,b}(n-1))+h_{a,b}(n-2) & & n>1.
                        \end{array}
\end{cases}
$$

We also define a related sequence ${\bf g}_{a,b}=(g_{a,b}(n))_{n\in\N}$, where
$$
g_{a,b}(n)=\begin{cases}\begin{array}{lcc}
                          0 &  & n<0, \\
                          a &  & n=0, \\
                          b &  & n=1,\\
                          g_{a,b}(n-g_{a,b}(n-1))+g_{a,b}(n-2) & & n>1.
                        \end{array}
\end{cases}
$$
As we see the sequence ${\bf g}_{a,b}$ is governed by the same recurrence as ${\bf h}_{a,b}$ but with different initial conditions.
There are striking differences in the behavior between the sequences ${\bf h}_{a,b}$ and ${\bf g}_{a,b}$ for various values of $a, b$. In particular, it turns out that for $a=1$ (and any $b \geq 1$) the solution ${\bf h}_{a,b}$ satisfies the requirements listed in Question \ref{main_question}.

Let us describe the contents of the paper in more detail. Section \ref{sec2} provides the necessary background information on automatic and regular sequences.

In Sections \ref{sec3}--\ref{sec5} we offer a detailed description of the sequence ${\bf h}_{a,b}$.

Section \ref{sec3} is devoted to the study of the particularly interesting case $a=b=1$. We obtain $h_{1,1}(2n+1)=n+1$ but the solution along even indices can be presented as a sum of functions counting certain partitions into powers of two (a rather surprising property, given the definition of ${\bf h}_{1,1}$). In particular, the sequence ${\bf h}_{1,1}$ does not satisfy a linear recurrence equation and its ordinary generating function is transcendental over the rational function field. Moreover, we also prove that the sequence obtained by reduction modulo 2 of the sequence ${\bf h}_{1,1}$ is 2-automatic.

In Section \ref{sec4} we analyze the case $a=1, b>1$. The sequence $h_{1,b}$ displays similar behavior as $h_{1,1}$. The main result is that $h_{1,b}(2n)=n+1$ and that $h_{1,b}(2n+1)$ is a linear combination of the sequence $(bin(n))_{n\in\N}$ (which counts partitions into power of two) and the sequence ${\bf h}_{1,1}$. Section \ref{sec5} deals with the remaining case $a > 1$. We prove for all $b \in \N_+$ the subsequences $h_{a,b}(2n)$ and  $h_{a,b}(2n+1)$ are eventually arithmetic progressions.

In Section \ref{sec6} we focus on the sequence ${\bf g}_{a,b}$. We prove that for any initial conditions $a, b\in\N_{+}$, the behavior of ${\bf g}_{a,b}$ is completely predictable. More precisely, depending on the parity of $a, b$ the subsequences $g_{a,b}(2n)$ and $g_{a,b}(2n+1)$ are eventually: constant, linear or satisfy linear recurrence equation of order bounded by $\op{max}\{a, b\}$. Finally, in the last section we formulate some questions and problems which may stimulate further research.

\section{Preliminaries on automatic and regular sequences}\label{sec2}
In this section, we recall the definition and some properties of automatic and regular sequences, which will be useful later in the paper. These sequences appear naturally in various branches of mathematics, such as number theory, theoretical computer science, symbolic dynamics, etc. For a detailed treatment of this topic, we refer the reader to the monograph by Allouche and Shallit \cite{AS03a} and their papers \cite{AS92, AS03b}.

Let $k \geq 2$ be a fixed integer. A sequence $\mathbf{a} = (a_n)_{n\in\N}$ is called {\it $k$-automatic} if its {\it $k$-kernel}, namely
$$ K_k(\mathbf{a}) = \{(a_{k^jn+i})_{n\in\N}: j \in \N, 0 \leq i < k^j \},
$$
is a finite set. Equivalently, $\mathbf{a}$ is $k$-automatic if there exists a deterministic finite automaton with output (DFAO) which reads the canonical base-$k$ representation of $n$ and outputs $a_n$.

One of the most famous automatic sequences is the Prouhet-Thue-Morse sequence $\mathbf{t}=(t_{n})_{n\in\N}$ (the PTM sequence for short). It is 2-automatic sequence defined in the following way: $t_{0}=1$ and
$$
t_{2n}=t_{n},\quad t_{2n+1}=-t_{n},\quad n\in \N.
$$
From this description it is clear that $K_k(\mathbf{t}) =\{\mathbf{t}, -\mathbf{t}\}$. Equivalently, the sequence is generated by the DFAO in Figure \ref{PTM_DFAO}. To generate $t_n$, one moves between the states (symbolized by nodes) according to subsequent digits in the binary representation of $n$. After the last digit has been read, the DFAO returns the output corresponding to the final state.
\begin{figure}[h!]
\centering
\begin{tikzpicture}
\node[state, initial](q0){$1$};
\node[state, right of=q0](q1){$-1$};

\draw
 (q0) edge[loop above] node{0} (q0)
 (q0) edge[bend left, above] node{1} (q1)
 (q1) edge[bend left, below] node{1} (q0)
 (q1) edge[loop above] node{0} (q1);
\end{tikzpicture}
\caption{A DFAO generating the PTM sequence.}
\label{PTM_DFAO}
\end{figure}
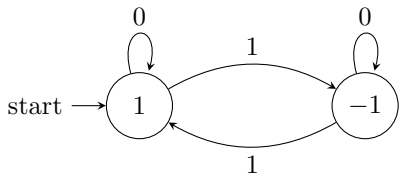

A famous result of Christol \cite{Chr79} relates the automaticity of sequences and the algebraicity of generating functions over finite fields.

\begin{thm}
Let $p$ be a prime number and $\mathbb{F}_q$ a finite field of order $q=p^k$, where $k\in\N_+$. Let $\mathbf{a} =(a_n)_{n\in\N}$ be a sequence with values in $\mathbb{F}_q$  and $A(X) = \sum_{n=0}^{\infty} a_n X^n$ be its generating function. Then $\mathbf{a}$ is $p$-automatic if and only if $A(X)$ is algebraic over the field of rational functions $\mathbb{F}_q(X)$.
\end{thm}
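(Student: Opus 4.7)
The plan is to prove both implications through the characterization of $p$-automaticity as finiteness of the $p$-kernel $K_p(\mathbf{a})$ (recalled in Section~\ref{sec2}), translated to a statement about the Cartier (section) operators $\Lambda_0,\ldots,\Lambda_{p-1}$ on $\F_q[[X]]$ defined by
\[
\Lambda_i\Bigl(\sum_n c_n X^n\Bigr) \;=\; \sum_n c_{pn+i} X^n.
\]
Under the correspondence between a $p$-kernel entry and the associated iterated Cartier image of $A$, finiteness of $K_p(\mathbf{a})$ is equivalent to finite-dimensionality of the $\F_q$-subspace $W(A)\subset\F_q[[X]]$ spanned by the orbit of $A$ under all words in $\Lambda_0,\ldots,\Lambda_{p-1}$. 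A standard preliminary step is to reduce to the case $q=p$: fixing an $\F_p$-basis $\alpha_1,\ldots,\alpha_k$ of $\F_q$ and writing $A=\sum_\ell A^{(\ell)}\alpha_\ell$ with $A^{(\ell)}\in\F_p[[X]]$, both $p$-automaticity of $\mathbf{a}$ and algebraicity of $A$ over $\F_q(X)$ decompose into the same properties of each $A^{(\ell)}$ over $\F_p$.

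For the direction ``automatic $\Rightarrow$ algebraic'', I would pick an $\F_p$-basis $B_1,\ldots,B_d$ of $W(A)$ and exploit the characteristic-$p$ identity
\[
f(X) \;=\; \sum_{i=0}^{p-1} X^i\bigl(\Lambda_i f(X)\bigr)^{p}, \qquad f\in\F_p[[X]],
\]
which follows from $g(X^p)=g(X)^p$ for $g\in\F_p[[X]]$. Applying it to each $B_\ell$ and expanding $\Lambda_i B_\ell\in W(A)$ in the chosen basis yields a Frobenius-linear system
\[
B_\ell \;=\; \sum_{m=1}^d P_{\ell m}(X)\,B_m^{p}, \qquad P_{\ell m}(X)\in\F_p[X].
\]
Taking $p^j$-th powers and using $c(X)^{p^j}=c(X^{p^j})$ over $\F_p$, one checks that the $\F_p(X)$-subspaces $V_j=\F_p(X)\langle B_1^{p^j},\ldots,B_d^{p^j}\rangle$ form an ascending chain of dimension at most $d$, hence stabilize at some $V_N$. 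Since $A^{p^j}\in V_j$ for all $j$, the $d+1$ elements $A^{p^N},A^{p^{N+1}},\ldots,A^{p^{N+d}}$ lie in $V_N$ and are therefore $\F_p(X)$-linearly dependent, giving an algebraic relation for $A$.

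For the converse, starting from an integral equation $\sum_{i=0}^n c_i(X)A^i=0$ with $c_n\neq 0$, I would construct a finitely generated $\F_p[X]$-submodule $M$ of $L=\F_p(X)(A)\cap\F_p[[X]]$ which contains $A$ and is stable under $\Lambda_0,\ldots,\Lambda_{p-1}$. The basic tool is the semi-linear identity $\Lambda_j(h^p g)=h\,\Lambda_j(g)$ for $h,g\in\F_p[[X]]$, which allows one to push membership in the $\F_p[X]$-lattice spanned by $1,A,\ldots,A^{n-1}$ (after uniformly clearing the leading coefficient $c_n(X)$) through a single Cartier step. Once such an $M$ is built, the observation that $\Lambda_i$ cannot increase the $X$-adic pole order of its argument confines the entire orbit of $A$ to a finite-dimensional $\F_p$-subspace of $M$, yielding finiteness of $W(A)$ and hence of $K_p(\mathbf{a})$.

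The main obstacle is the explicit construction of the $\Lambda_i$-stable module in the converse direction: because $\Lambda_i$ is not a ring homomorphism, ensuring that $\Lambda_i(M)\subseteq M$ requires careful bookkeeping of the denominators produced by the relation $\sum c_i(X)A^i=0$ and of their interaction with the Frobenius decomposition of each $A^j$. This step is the technical heart of Christol's original argument, whereas the forward direction reduces cleanly to linear algebra over $\F_p(X)$ once the Frobenius identity has been invoked.
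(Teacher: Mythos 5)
This statement is Christol's theorem, which the paper quotes as background and attributes to \cite{Chr79} without giving a proof, so there is no in-paper argument to compare you against; I will judge the proposal on its own terms. Your outline is the standard Cartier-operator proof (essentially the one in \cite{AS03a}, Chapter 12). The reduction to $q=p$ and the direction ``automatic $\Rightarrow$ algebraic'' are correct and essentially complete: finiteness of the $p$-kernel gives a finite-dimensional $\Lambda_i$-stable space $W(A)$, the Frobenius identity $f=\sum_{i=0}^{p-1} X^i(\Lambda_i f)^p$ turns a basis of $W(A)$ into a Frobenius-linear system, and the stabilizing chain $V_j$ forces an $\F_p(X)$-linear dependence among $A^{p^N},\dots,A^{p^{N+d}}$, i.e.\ an algebraic equation for $A$.

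The converse, however, is only a plan, and the part you defer is the entire content of that direction. You say you ``would construct'' a finitely generated $\F_p[X]$-module $M\ni A$ stable under the $\Lambda_i$, but you do not construct it, and the two ingredients you would need are missing: (i) an Ore-type reduction of the given equation $\sum_i c_i A^i=0$ to a relation $B=\sum_{j\ge 1}b_j B^{p^j}$ for a suitable polynomial multiple $B$ of $A$, in which the untwisted term has been eliminated with nonzero coefficient --- this is where one must use injectivity of the $p$-th power map and normalize a minimal relation, and it is not automatic from the integral equation you start with; and (ii) the degree bookkeeping showing that a lattice of the form $\bigl\{\sum_j P_j B^{p^j}: \deg P_j\le D\bigr\}$ for suitable $D$ is carried into itself by each $\Lambda_i$, via $\Lambda_i(h^pg)=h\,\Lambda_i(g)$ together with $\deg\Lambda_i(P)\le \deg(P)/p$. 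Your appeal to the ``$X$-adic pole order'' is also off target: everything lives in $\F_p[[X]]$, where there are no poles, and the finiteness of the orbit comes from the degree bound in (ii) combined with the finiteness of $\F_p$, not from any valuation-theoretic monotonicity of $\Lambda_i$. As written, the proposal correctly identifies the right strategy and settles one implication, but does not yet prove the harder one.
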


A natural generalization of $k$-automatic sequences are so-called {\it $k$-regular sequences}. More precisely, assume that a sequence $\mathbf{a} = (a_n)_{n\in\N}$ takes values in a $\Z$-module $R$. Then $\mathbf{a}$ is $k$-regular if there exist finitely many sequences $\mathbf{a}_{i} = (a_i(n))_{n\in\N}$ with values in $R$ such that each sequence in $K_k(\mathbf{a})$ is a $\Z$-linear combination of the $\mathbf{a}_{i}$. In other words, the $\Z$-submodule generated by $K_k(\mathbf{a})$ is finitely generated.

It is rather easy to check that $k$-automatic sequences are precisely $k$-regular sequences taking only finitely many values. The following result \cite[Theorem 2.10]{AS92} asserts that the growth of a $k$-regular sequence is at most polynomial.

\begin{thm}
Let $(a_n)_{n\in\N}$ be a $k$-regular sequence with values in $\mathbb{C}$. Then there exists a constant $C$ such that $a_n = O(n^C)$.
\end{thm}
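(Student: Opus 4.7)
\smallskip
\noindent\textbf{Proof proposal.} The plan is to linearize the decimation maps on the module generated by the $k$-kernel, and then encode $a_n$ as a matrix product indexed by the base-$k$ digits of $n$, from which a polynomial bound follows immediately.

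First I would use the hypothesis: by $k$-regularity, the $\Z$-submodule $V \subseteq \mathbb{C}^{\N}$ generated by $K_k(\mathbf{a})$ is finitely generated. Since $\mathbb{C}^{\N}$ is torsion-free, $V$ is a free $\Z$-module of some finite rank $r$, so one can fix a basis $\mathbf{v}_1,\dots,\mathbf{v}_r$. For each $0 \leq j < k$ consider the decimation operator $\sigma_j \colon (b_n)_{n\in\N} \mapsto (b_{kn+j})_{n\in\N}$. By the definition of the $k$-kernel, $\sigma_j$ sends $K_k(\mathbf{a})$ into itself, hence by $\Z$-linearity it restricts to an endomorphism of $V$. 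In the chosen basis it is represented by an integer matrix $M_j \in M_r(\Z)$. Also write $\mathbf{a} = \sum_i c_i \mathbf{v}_i$ and let $\mathbf{c}=(c_1,\dots,c_r)^T$.

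Next I would express $a_n$ as a matrix product. For $n\geq 1$ with base-$k$ expansion $n=\sum_{i=0}^{\ell-1} n_i k^i$ (with $n_{\ell-1}\neq 0$), a short induction on $\ell$ shows that
$$ a_n = \bigl(\sigma_{n_{\ell-1}}\cdots \sigma_{n_1}\sigma_{n_0}\,\mathbf{a}\bigr)_0. $$
Passing to coordinates relative to $\mathbf{v}_1,\dots,\mathbf{v}_r$ and letting $\mathbf{w}\in\mathbb{C}^r$ be the coordinate vector of the ``evaluation at index $0$'' linear functional, this reads
$$ a_n = \mathbf{w}^T M_{n_{\ell-1}} M_{n_{\ell-2}} \cdots M_{n_1} M_{n_0}\,\mathbf{c}. $$
Equip $M_r(\mathbb{C})$ with any submultiplicative norm and set $M=\max_{0\leq j<k}\|M_j\|$. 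Then $|a_n|\leq \|\mathbf{w}\|\,\|\mathbf{c}\|\,M^{\ell}$, and since $\ell\leq \log_k n + 1$, one obtains $|a_n| = O(n^{\log_k M})$, giving the desired polynomial bound with $C=\log_k M$.

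The only real subtlety, which I would double-check carefully, is the inductive identity $a_n = (\sigma_{n_{\ell-1}}\cdots\sigma_{n_0}\mathbf{a})_0$, i.e.\ being careful about the order in which the decimation operators are composed and making sure that after $\ell$ steps one has indeed reduced the index down to $0$. Everything else is essentially bookkeeping: that $V$ is free (use torsion-freeness of $\mathbb{C}^{\N}$), that $\sigma_j$ preserves $V$ (use $\Z$-linearity together with the stability of $K_k(\mathbf{a})$), and the submultiplicativity of the norm to pass from an $\ell$-fold matrix product to the exponential bound $M^\ell$.
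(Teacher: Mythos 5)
Your argument is correct, and it is essentially the standard proof of this fact via linear representations; note that the paper itself does not prove this statement but merely quotes it from Allouche--Shallit \cite{AS92}, where exactly this matrix-product argument is used. The one step you flagged does check out: by induction on $\ell$ one gets the stronger identity $\bigl(\sigma_{n_{\ell-1}}\cdots\sigma_{n_0}\mathbf{a}\bigr)_m = a_{k^{\ell}m+n}$ for all $m$, with the least significant digit applied first, and setting $m=0$ yields your formula.
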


\section{Analysis of the sequence ${\bf h}_{1,1}$}\label{sec3}

%
%

In this section, we analyze the behavior of the sequence $(h_{1,1}(n))_{n\in\N}$.
We write $h(n)=h_{1,1}(n)$ and in the sequel, we consider the sequence
${\bf h}=(h(n))_{n\in\N}$. The terms $h(n)$ for $n=0, 1, 2, \ldots, 25$ are as follows:
$$
1, 1, 2, 2, 4, 3, 6, 4, 10, 5, 13, 6, 19, 7, 23, 8, 33, 9, 38, 10, 51, 11, 57, 12, 76, 13, \ldots .
$$

To begin, we prove that the terms $h(2n)$ and $h(2n+1)$ satisfy simple formulas, eliminating the need to calculate the nested term $h(n-h(n-1))$.

\begin{thm}\label{rech}
The sequence $\mathbf{h}$ satisfies the following recurrence relations: $h(0) = h(1) = 1$ and for all $n\in\N$ we have
\begin{align*}
h(2n+1)&=n+1, \\
h(2n+2)&=h(2n)+h(n+1)
\end{align*}
\end{thm}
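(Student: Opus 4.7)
The plan is to use joint strong induction on $n$, proving both identities simultaneously together with an auxiliary lower bound. Let $P(n)$ denote the compound statement ``$h(2n+1)=n+1$, $h(2n+2)=h(2n)+h(n+1)$, and $h(2n+2)\geq 2n+2$''. The base case $P(0)$ is immediate: $h(1)=1$ by definition, while unwinding the recurrence gives $h(2)=h(2-h(1))+h(0)=h(1)+h(0)=2$, which equals both $h(0)+h(1)$ and $2\cdot 0+2$.

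For the inductive step, suppose $P(0),\dots,P(n)$ hold and derive $P(n+1)$. Applying the defining recurrence to $h(2n+3)$ yields
\[
h(2n+3) = h(2n+3-h(2n+2)) + h(2n+1) = h(2n+3-h(2n+2)) + (n+1),
\]
where I substituted $h(2n+1)=n+1$ from $P(n)$. The identity $h(2n+3)=n+2$ will follow once I show that the nested term equals $1$. Since the initial conditions force $h(m)=1$ for every $m\leq 1$ (namely $h(0)=h(1)=1$ together with the convention $h(m)=1$ for $m<0$), this reduces to the bound $2n+3-h(2n+2)\leq 1$, i.e.\ $h(2n+2)\geq 2n+2$, which is precisely the third clause of $P(n)$. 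A second application of the recurrence then gives $h(2n+4)=h(2n+4-(n+2))+h(2n+2)=h(n+2)+h(2n+2)$, which is the second clause of $P(n+1)$.

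It remains to reestablish the lower bound $h(2n+4)\geq 2n+4$. Using the identity just proved, $h(2n+4)=h(2n+2)+h(n+2)\geq(2n+2)+h(n+2)$, so it suffices to verify $h(n+2)\geq 2$. This follows by a short parity argument: if $n+2=2k+1$ with $k\geq 1$, then the hypothesis $h(2k+1)=k+1$ gives $h(n+2)\geq 2$; if $n+2=2k+2$ with $k\geq 0$, then the third clause of $P(k)$ (with $k=0$ handled by the base case) gives $h(n+2)\geq 2k+2\geq 2$.

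The main obstacle is the apparently circular interdependence of the three clauses: the value of $h(2n+3)$ requires the lower bound on $h(2n+2)$, the formula for $h(2n+4)$ uses the value of $h(2n+3)$, and the new bound $h(2n+4)\geq 2n+4$ rests on that formula. Packaging all three statements into a single induction hypothesis $P(n)$ breaks the circularity cleanly; the only subtlety is making sure the base case already supplies the tight bound $h(2)\geq 2$ so that the whole machinery gets off the ground.
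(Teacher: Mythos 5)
Your proposal is correct and follows essentially the same strategy as the paper: a joint induction establishing the formula for odd-indexed terms together with the lower bound $h(2n)\geq 2n$ on even-indexed terms, which forces the nested argument $2n+1-h(2n)$ down to at most $1$. Your packaging of the three clauses into one hypothesis and the weaker requirement $h(n+2)\geq 2$ make the bookkeeping slightly cleaner (one base case instead of three), but the underlying argument is the same.
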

\begin{proof}
We first use induction on $n$ to simultaneously prove that $h(2n)\geq 2n$ and $h(2n+1)=n+1$. It is clear that this claim is true for $n=0, 1, 2$. Let us suppose that this holds for all numbers $<2n$. Then we have $$h(2n)=h(2n-h(2n-1))+h(2n-2)=h(n)+h(2n-2).$$
We also see that the result will follow as soon as our claim is proved.

If $2|n$, then from our assumption $h(n)\geq n$, and thus for $n\geq 2$ we get
$$h(2n)\geq n+2(n-1)=3n-2\geq 2n.$$
If $2\nmid n$, then for $n\geq 2$ we have
$$h(2n)\geq \frac{n+1}{2}+2n-2\geq 2n,$$
and we obtain the first part of our claim. To obtain the second part, we note that
$$h(2n+1)=h(2n+1-h(2n))+h(2n-1)=1+n$$
because $2n+1-h(2n)\leq 1$. The result follows.
\end{proof}

Although very simple, the above result has several nice applications. We start with the following.

\begin{cor} \label{rech_corollary}
For all $n \in \N$ we have
$$ h(2n) = \sum_{i=0}^n h(i).  $$
\end{cor}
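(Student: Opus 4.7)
The plan is to proceed by straightforward induction on $n$, leveraging the recurrence $h(2n+2) = h(2n) + h(n+1)$ established in Theorem \ref{rech}.

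For the base case $n=0$, I simply observe that $h(0) = 1 = \sum_{i=0}^{0} h(i)$, using the initial condition.

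For the inductive step, I assume the claim $h(2n) = \sum_{i=0}^{n} h(i)$ and compute
\begin{equation*}
h(2(n+1)) = h(2n+2) = h(2n) + h(n+1) = \sum_{i=0}^{n} h(i) + h(n+1) = \sum_{i=0}^{n+1} h(i),
\end{equation*}
where the second equality is the recurrence from Theorem \ref{rech} and the third is the inductive hypothesis.

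There is no real obstacle here; the corollary is essentially a direct unwinding of the partial-sum form of the recurrence $h(2n+2) - h(2n) = h(n+1)$, so the proof is two short lines.
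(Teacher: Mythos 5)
Your induction is correct and is exactly the intended argument: the paper states the corollary without proof as an immediate consequence of Theorem \ref{rech}, and telescoping $h(2n+2)-h(2n)=h(n+1)$ from the base case $h(0)=1$ is the obvious way to fill it in. Nothing to add.
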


Furthermore, we obtain interesting identities involving the terms of the Prouhet-Thue-Morse sequence. For $k \in \N$ and $n \geq 2^{k+1}-2$ let us define
\begin{equation} \label{F_kn}
    F(k,n) = \sum_{i=0}^{2^k-1} t_i h(n-2i).
\end{equation}

\begin{thm} \label{thm_ptm}
For all $k\in \N$ and $n \geq 2^{k+1}-2$ we have the following relations:
\begin{align*}
    F(k,2n) &= \begin{cases}
                  h(2n)  &\text{if } k=0,\\
                  F(k-1,n) &\text{if } k\geq 1,
            \end{cases} \\
   F(k,2n+1) &= \begin{cases}
   n+1  &\text{if } k=0, \\
   1    &\text{if } k=1, \\
   0    &\text{if } k \geq 2.
   \end{cases}
\end{align*}
In particular, for any $n \in \N_{+}$, we get the identity
\begin{equation} \label{ptm_identity}
  \sum_{i=0}^{2^{k}-1}t_{i}h(2^{k+1}n+2^{k}-2i)=n+1.
\end{equation}
\end{thm}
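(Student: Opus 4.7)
The plan is to prove the two recursive identities for $F(k,n)$ by simultaneous induction on $k$, and then derive \eqref{ptm_identity} by iterating the first recursion down to the base case. The base $k=0$ is immediate from Theorem \ref{rech}: since $F(0,n)=t_0 h(n)=h(n)$, we have $F(0,2n)=h(2n)$ and $F(0,2n+1)=n+1$. For the inductive step I would use two ingredients repeatedly: the reformulation $h(2m)-h(2m-2)=h(m)$ of Theorem \ref{rech} (valid for $m\geq 1$), and the defining PTM recurrences $t_{2j}=t_j$, $t_{2j+1}=-t_j$.

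For the even case, splitting the sum $F(k,2n)$ by parity of the index $i$ produces two halves whose difference telescopes:
\begin{align*}
F(k,2n) &= \sum_{j=0}^{2^{k-1}-1} t_j h(2n-4j) - \sum_{j=0}^{2^{k-1}-1} t_j h(2n-4j-2)\\
&= \sum_{j=0}^{2^{k-1}-1} t_j \bigl[h(2(n-2j))-h(2(n-2j)-2)\bigr] = \sum_{j=0}^{2^{k-1}-1} t_j h(n-2j) = F(k-1,n).
\end{align*}
For the odd case, Theorem \ref{rech} gives $h(2n+1-2i)=n-i+1$ directly, so
$$ F(k,2n+1) = (n+1)\sum_{i=0}^{2^k-1} t_i \;-\; \sum_{i=0}^{2^k-1} i\, t_i. $$
The first sum vanishes for $k\geq 1$ by the standard PTM identity (which itself drops out of the same even/odd split). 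For the second, splitting again yields
$$ \sum_{i=0}^{2^k-1} i\,t_i = \sum_{j=0}^{2^{k-1}-1}\bigl(2j\cdot t_j - (2j+1) t_j\bigr) = -\sum_{j=0}^{2^{k-1}-1} t_j, $$
which equals $-1$ for $k=1$ and $0$ for $k\geq 2$. Substituting yields $F(1,2n+1)=1$ and $F(k,2n+1)=0$ for $k\geq 2$, as required.

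The identity \eqref{ptm_identity} then follows by applying the even-case recursion $k$ times: since $2^{j+1}n+2^j=2(2^j n+2^{j-1})$ for every $j\geq 1$, we get
$$ F(k,\,2^{k+1}n+2^k) = F(k-1,\,2^k n+2^{k-1}) = \cdots = F(0,\,2n+1) = n+1. $$

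The only real obstacle I anticipate is keeping track of the domain restriction $n\geq 2^{k+1}-2$ through each reduction so that every application of Theorem \ref{rech} and of the inductive hypothesis is legitimate. Concretely, in the telescoping argument one needs $n-2j\geq 1$ for all $0\leq j\leq 2^{k-1}-1$, which follows from $n\geq 2^k-1$; and in the iteration giving \eqref{ptm_identity} one must check that $2^{j+1}n+2^j\geq 2^{j+2}-2$ at each intermediate stage, which reduces to $n\geq 1$. These verifications are routine but should be recorded.
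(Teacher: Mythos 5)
Your proof is correct and follows essentially the same route as the paper's: the even case is the identical parity-split/telescoping argument based on $h(2m+2)=h(2m)+h(m+1)$ from Theorem \ref{rech}, and the odd case reduces to evaluating $\sum_i t_i$ and $\sum_i i\,t_i$ over a dyadic block. The only (cosmetic) difference is that you compute these two power sums directly by the same parity split, whereas the paper cites the general Prouhet identity $\sum_{i=0}^{2^k-1}t_i i^m=0$ for $m<k$ from the literature.
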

\begin{proof}
The formula $F(0,2n)=h(2n)$ follows from the definition. Now, fix $n\in\N$ and write $f(n,i)=h(n-2i)$. Theorem \ref{rech} implies that we have
$$ f(2n,2i) = f(2n,2i+1) + f(n,i).   $$
Therefore, for $k \geq 1$ we get
\begin{align*}
    F(k,2n) &= \sum_{i=0}^{2^{k}-1}t_{i}f(2n,i) =
    \sum_{i=0}^{2^{k-1}-1} \left[ t_{2i} f(2n,2i) + t_{2i+1} f(2n,2i+1) \right] \\
    &= \sum_{i=0}^{2^{k-1}-1} t_i \left[ f(2n,2i) - f(2n,2i+1) \right]  = \sum_{i=0}^{2^{k-1}-1}t_{i}f(n,i) = F(k-1,n).
\end{align*}

Moving on to $F(k,2n+1)$, we have $F(0,2n+1) = h(2n+1)=n+1$ by Theorem \ref{rech}. Similarly,
$$ F(1,2n+1) = h(2n+1) - h(2n-1) = n+1 - n = 1.  $$
For $k \geq 2$ we have
$$ F(k,2n+1) = \sum_{i=0}^{2^{k}-1} t_i h(2(n-i)+1) = \sum_{i=0}^{2^{k}-1} t_i(n-i+1) = 0, $$
where the last equality follows from the following identity \cite{Cer}, valid for all $m < k$:
\begin{equation} \label{eq:ptm_polynomial}
    \sum_{i=0}^{2^{k}-1}t_{i}i^{m}=0.
\end{equation}

Finally, \eqref{ptm_identity} is obtained by applying $k$ times the relation $ F(k,2n) = F(k-1,n)$.
\end{proof}

As an immediate consequence, we get the following corollary.

\begin{cor}\label{cor1}
There is no polynomial $P\in\Z[x]$ such that $h(2n)=P(n)$ for infinitely many $n\in\N$.
\end{cor}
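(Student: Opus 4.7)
The plan is to establish Corollary \ref{cor1} by proving that $h(2n)$ grows faster than any polynomial. Since $h(2n+2)-h(2n)=h(n+1)\geq 1$, the subsequence $(h(2n))_{n\in\N}$ is strictly increasing and tends to infinity; combined with super-polynomial growth, this forces $h(2n)>|P(n)|$ for all sufficiently large $n$ and any fixed $P\in\Z[x]$, so the set of $n$ with $h(2n)=P(n)$ is automatically finite.

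The first step is to show by induction on $k\in\N$ that there exist constants $C_k>0$ and $M_k\in\N$ with $h(2n)\geq C_k n^k$ for all $n\geq M_k$. The base case $k=0$ is immediate from $h(2n)\geq 1$. For the inductive step, Corollary \ref{rech_corollary} gives $h(2n)=\sum_{i=0}^{n}h(i)$, so restricting the sum to even indices and using the induction hypothesis yields, for $n\geq 2M_k$,
$$
h(2n)\ \geq\ \sum_{j=M_k}^{\lfloor n/2\rfloor} h(2j)\ \geq\ C_k\sum_{j=M_k}^{\lfloor n/2\rfloor} j^{k}.
$$
The right-hand side is comparable to $n^{k+1}$ and therefore exceeds $C_{k+1}n^{k+1}$ for a suitable new constant $C_{k+1}>0$ once $n$ is large enough. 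Each iteration raises the polynomial degree of the lower bound by one.

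The second step is the conclusion. Given $P\in\Z[x]$ of degree $d$, apply the growth bound with $k=d+1$: for all sufficiently large $n$ we obtain $h(2n)\geq C_{d+1}n^{d+1}>|P(n)|$, which implies $h(2n)\neq P(n)$. Therefore the set $\{n\in\N:h(2n)=P(n)\}$ is finite, proving the corollary.

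I expect the main (admittedly minor) obstacle to be the routine bookkeeping of constants $C_k$ and thresholds $M_k$ through the induction. As an alternative route, one could try to exploit Theorem \ref{thm_ptm} directly: substituting $h(2m)=P(m)$ into \eqref{ptm_identity} with $k>\deg P$ and using the vanishing identity \eqref{eq:ptm_polynomial} yields an immediate contradiction whenever the polynomial agreement holds on $2^k$ consecutive values of $m$. Since the hypothesis of merely infinitely many coincidences does not automatically produce such runs, the growth argument above seems the cleanest way to handle the statement as written.
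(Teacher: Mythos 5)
Your proof is correct, and it takes a genuinely different route from the paper. The paper deduces Corollary \ref{cor1} from the Prouhet--Thue--Morse identity \eqref{ptm_identity} combined with the vanishing relation \eqref{eq:ptm_polynomial}: substituting $h(2m)=P(m)$ into \eqref{ptm_identity} with $k>\deg P$ gives $n+1=0$. You instead use Corollary \ref{rech_corollary} to bootstrap a super-polynomial lower bound $h(2n)\geq C_k n^k$ by induction on $k$, which is elementary and self-contained; the induction step is sound, since restricting $\sum_{i=0}^n h(i)$ to even indices $i=2j$ with $M_k\leq j\leq\lfloor n/2\rfloor$ and summing $j^k$ does produce a bound of order $n^{k+1}$. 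Your closing remark identifies a real issue: the paper's argument, as written, assumes $h(2n)=P(n)$ for \emph{all} $n$ (it needs the polynomial identity on a full block of $2^k$ consecutive even arguments $2^{k+1}n+2^k-2i$, $0\leq i<2^k$), so it does not directly dispose of the ``infinitely many $n$'' hypothesis in the statement; your growth argument handles the statement exactly as claimed, since once $h(2n)>|P(n)|$ for all large $n$, the coincidence set is finite regardless of its structure. What the paper's approach buys is brevity and an algebraic identity of independent interest; what yours buys is a proof of the literal statement and a quantitative fact (super-polynomial growth of $(h(2n))_{n\in\N}$) that anticipates the stronger bound $h(2n)\geq bin(n)$ established later in Theorem \ref{nonregular}.
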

\begin{proof}
As a consequence of \eqref{eq:ptm_polynomial}, for each polynomial $P\in\Z[x]$ of degree $<k$ we get that $\sum_{i=0}^{2^{k}-1}t_{i}P(x+i)=0$.

Let us suppose that there is a polynomial $P\in\Z[x]$ of degree $d\in\N_{+}$ satisfying $h(2n)=P(n)$ for $n\in\N$. However, from \eqref{ptm_identity} for fixed $n\in\N$ and $k>d$ we would have
$$
n+1=\sum_{i=0}^{2^{k-1}-1}t_{i}Q(i)=0,
$$
where $Q\in\Z[x]$ satisfies $Q(x)=P(2^{k+1}n+2^{k}-2x)$. We get a contradiction.
\end{proof}

In the next result, we establish a functional equation for the ordinary generating function of the sequence ${\bf h}$, namely
$$H(x)=\sum_{n=0}^{\infty}h(n)x^{n}.$$

\begin{thm}\label{genfunh}
The series $H$ satisfies a Mahler-type functional equation of the form
$$
\frac{1}{1-x^2}H(x^2)-H(x)+\frac{x}{(1-x^2)^{2}}=0.
$$
\end{thm}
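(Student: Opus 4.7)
The plan is to split the generating function into its even and odd parts and use the two recurrences from Theorem \ref{rech} to rewrite each piece.

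First, I would write $H(x) = H_e(x) + H_o(x)$, where
\[
H_e(x) = \sum_{n=0}^{\infty} h(2n)\, x^{2n}, \qquad H_o(x) = \sum_{n=0}^{\infty} h(2n+1)\, x^{2n+1}.
\]
The odd part is immediate from $h(2n+1) = n+1$: summing the geometric-type series gives
\[
H_o(x) = x \sum_{n=0}^{\infty} (n+1) x^{2n} = \frac{x}{(1-x^2)^2}.
\]

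Next I would tackle $H_e(x)$ using $h(2n+2) = h(2n) + h(n+1)$ and the initial value $h(0)=1$. Peeling off the $n=0$ term and substituting,
\[
H_e(x) = 1 + \sum_{n=0}^{\infty} \bigl(h(2n) + h(n+1)\bigr) x^{2n+2} = 1 + x^2 H_e(x) + \sum_{n=0}^{\infty} h(n+1) x^{2n+2}.
\]
The last sum, after a reindex $m=n+1$, becomes $\sum_{m=1}^{\infty} h(m) x^{2m} = H(x^2) - h(0) = H(x^2) - 1$. Therefore
\[
(1-x^2) H_e(x) = H(x^2), \qquad H_e(x) = \frac{H(x^2)}{1-x^2}.
\]

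Finally, adding the two pieces,
\[
H(x) = \frac{H(x^2)}{1-x^2} + \frac{x}{(1-x^2)^2},
\]
which is the claimed functional equation after rearrangement. There is no real obstacle here; the only thing to be careful about is the bookkeeping of the initial term $h(0)=1$, which cancels cleanly when one recognizes that $\sum_{n\geq 0} h(n+1) x^{2n+2} = H(x^2)-1$.
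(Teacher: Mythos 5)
Your proof is correct and takes essentially the same approach as the paper: both arguments split $H$ into its even and odd parts, evaluate the odd part from $h(2n+1)=n+1$ as $\frac{x}{(1-x^2)^2}$, and identify the even part with $\frac{1}{1-x^2}H(x^2)$. The only cosmetic difference is that the paper first telescopes the recurrence into $h(2n)=\sum_{i=0}^{n}h(i)$ (Corollary \ref{rech_corollary}) and reads off the factor $\frac{1}{1-x^2}$ from the partial-sum identity, whereas you work directly from $h(2n+2)=h(2n)+h(n+1)$; the two computations are equivalent.
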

\begin{proof}
We have the identities
\begin{align*}
\frac{1}{2}(H(x)+H(-x))&=\sum_{n=0}^{\infty}h(2n)x^{2n}=\sum_{n=0}^{\infty}\left(\sum_{i=0}^{n}h(i)\right)x^{2n}=\frac{1}{1-x^{2}}H(x^2),\\
\frac{1}{2}(H(x)-H(-x))&=\sum_{n=0}^{\infty}h(2n+1)x^{2n+1}=\sum_{n=0}^{\infty}(n+1)x^{2n+1}=\frac{x}{(1-x^{2})^{2}}.
\end{align*}
If we add side-by-side of the above equalities, we obtain the statement.
\end{proof}

As an immediate consequence, we get the following corollary.

\begin{cor}\label{transH}
The generating function $H(x)$ of the sequence ${\bf h}$ is transcendental over $\Q(x)$. In particular, neither the sequence ${\bf h}$ nor its subsequence $(h(2n))_{n\in\N}$ are linear recurrence sequences.
\end{cor}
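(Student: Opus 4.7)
The plan is to prove transcendence of $H$ over $\Q(x)$ via the Mahler-type functional equation from Theorem \ref{genfunh} combined with Corollary \ref{cor1}. I first rule out $H$ being rational. Assume for contradiction $H \in \Q(x)$ and rewrite the functional equation as
$$H(x) = \frac{H(x^2)}{1-x^2} + \frac{x}{(1-x^2)^2}.$$
If $\alpha \in \C \setminus \{0, \pm 1\}$ is a pole of $H$, then the right-hand side is singular at $x = \pm\sqrt\alpha$ (since $H(x^2)$ has a pole there while $(1-x^2)|_{x=\pm\sqrt\alpha} = 1-\alpha \neq 0$ and $\pm\sqrt\alpha \neq \pm 1$), so $\pm\sqrt\alpha$ must also be poles of $H$. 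Iterating, the set $\{\alpha^{1/2^k} : k \geq 0\}$ would consist of pairwise distinct poles of $H$, contradicting rationality. Hence the poles of $H$ lie in $\{\pm 1\}$, giving $H(x) = P(x)/[(1-x)^a(1+x)^b]$ for some $P \in \Q[x]$ and $a, b \in \N$. Partial fractions then yield $h(n) = p_1(n) + (-1)^n p_2(n)$ for all sufficiently large $n$, with $p_1, p_2 \in \Q[x]$, so $h(2n) = p_1(2n) + p_2(2n)$ is a polynomial in $n$ for large $n$, directly contradicting Corollary \ref{cor1}.

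To upgrade non-rationality to transcendence, I would invoke the classical dichotomy theorem of Nishioka for Mahler-type functional equations: after clearing denominators, Theorem \ref{genfunh} reads
$$(1-x^2)^2 H(x) - (1-x^2) H(x^2) + x = 0,$$
which (by combining with the equation obtained via $x \mapsto x^2$ to eliminate the constant term) can be rewritten as a nontrivial homogeneous Mahler equation in $H(x), H(x^2), H(x^4)$ with polynomial coefficients. The dichotomy asserts that any power series satisfying such an equation is either rational or transcendental over $\Q(x)$; combined with the first step, this shows $H$ is transcendental. The subsidiary claims follow immediately: both ${\bf h}$ and $(h(2n))_{n\in\N}$ would have rational generating functions if they were linear recurrence sequences, but by Corollary \ref{rech_corollary} the generating function of the even subsequence equals $H(x)/(1-x)$, so its rationality would imply that of $H$, contradicting the first step.

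The main obstacle is the reliance on Nishioka's non-elementary dichotomy theorem to bridge non-rationality and transcendence. Alternatively, one could attempt a self-contained argument: the iteration of singularities extends to any algebraic $H$, forcing all finite branch points and poles of an algebraic $H$ satisfying Theorem \ref{genfunh} into $\{\pm 1\}$; a further Puiseux-expansion analysis at these points, combined with the exact identity $h(2n+1) = n+1$ and the non-polynomial behaviour guaranteed by Corollary \ref{cor1}, would then be needed to rule out the algebraic non-rational case.
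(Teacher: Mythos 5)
Your proof is correct and reaches the same conclusion, but the non-rationality step is genuinely different from the paper's. The paper rules out $H\in\Q(x)$ by reducing modulo $2$: the reduction $G=H\bmod 2$ satisfies the algebraic equation \eqref{G_algebraic_equation}, and the irreducibility of the quadratic $(1-x^2)y^2+(1-x^2)^2y+x$ over $\F_2[x,y]$ shows $G\notin\F_2(x)$, hence $H\notin\Q(x)$. You instead propagate poles through the functional equation to confine any poles of a hypothetically rational $H$ to $\{\pm 1\}$, deduce that $h(2n)$ would eventually be polynomial in $n$, and contradict Corollary \ref{cor1}. Both work: the paper's route is shorter and reuses \eqref{G_algebraic_equation} to obtain the $2$-automaticity of $h(n)\bmod 2$ later, while yours stays over $\C$ and exploits the Prouhet--Thue--Morse identities already established. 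Three small points on your version: (i) Corollary \ref{cor1} is stated for $P\in\Z[x]$, whereas partial fractions give rational coefficients; this is harmless because the proof of Corollary \ref{cor1} (via $\sum_{i=0}^{2^k-1}t_i i^m=0$) works verbatim over $\Q$, and since your polynomial identity holds for \emph{all} large $n$, the full dyadic windows that proof actually needs are available. (ii) When iterating square roots of a pole $\alpha$, you should take \emph{all} $2^k$-th roots of $\alpha$ (there are $2^k$ of them, all poles, none equal to $\pm 1$ since $\alpha\neq 1$) or fix a branch carefully: for a primitive cube root of unity a careless choice of successive square roots cycles with period two. (iii) For the transcendence upgrade you invoke the same Nishioka dichotomy as the paper; the homogenization in $H(x),H(x^2),H(x^4)$ you describe is unnecessary, since the theorem applies directly to the inhomogeneous relation $H(x^2)=(1-x^2)H(x)-x/(1-x^2)$, exactly as the paper uses it.
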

 \begin{proof}
We treat the series $H$ as a element of $\Z[[x]]\subset \C[[x]]$. Let us recall the Nishioka theorem \cite{Ni1} (see also \cite{Ni2}) which says that if $f\in \C[[x]]$ satisfies a functional equation of the form $f(x^{d})=\phi(x,f(x))$, where $\phi\in \C(x,y)$, then $f$ is either rational or transcendental. In our case, $d=2$ and
$$
\phi(x,y)=(1-x^2)y-\frac{x}{1-x^2}.
$$
Thus, to get the result, it is enough to prove that the series $H$ is not rational. To see this, it suffices to consider the reduction modulo 2 of $H$, i.e., the series $G(x)=H(x)\pmod*{2}$. The series $G$ is an element of $\mathbb{F}_{2}[[x]]$ and satisfies the algebraic equation
\begin{equation} \label{G_algebraic_equation}
(1-x^2)G(x)^2+(1-x^2)^2G(x)+x=0
\end{equation}
(this is consequence a of the congruence $G(x^2)\equiv G(x)^2\pmod*{2}$). However, it is easy to check that the polynomial $P(x,y)=(1-x^2)y^2+(1-x^2)^2y+x$ is irreducible in $\mathbb{F}_{2}[x,y]$. We thus see that the series $G$ is not rational over $\mathbb{F}_{2}(x)$ and thus $H$ can not be rational. The Nishioka theorem implies its transcendentality over $\C(x)$ and our theorem is proved.
\end{proof}

Before we prove our next result, we recall some basic facts about the binary partition function.
Let us write
$$
B(x)=\prod_{n=0}^{\infty}\frac{1}{1-x^{2^{n}}}=\sum_{n=0}^{\infty}bin(n)x^{n}.
$$
The number $bin(n)$ counts the representations of an integer $n$ in the form $n=\sum_{i=1}^{k}2^{r_{i}}$, where $r_{i}\in\N$ for $i=1 \ldots, k$ and $r_{1}\leq r_{2}\leq \ldots\leq r_{k}$. In other words, $bin(n)$ counts the number of partitions of $n$ with parts in the set $\{2^{i}:\;i\in\N\}$. We can deduce that $B$ satisfies the functional equation $(1-x)B(x)=B(x^2)$ and the sequence $(bin(n))_{n\in\N}$ can be defined recursively as follows:
$$
bin(0)=1, \quad bin(2n)=bin(2n-1)+bin(n),\quad bin(2n+1)=bin(2n).
$$
This sequence was introduced by Euler but it seems that the first non-trivial results were proved by Churchhouse in \cite{Ch}.
The most important fact for our purposes is that the value of $bin(n)$ grows exponentially. More precisely, Mahler proved that $\log_{2} bin(n)\sim \frac{1}{2}(\log_{2} n)^{2}$. For a proof of this fact, see \cite{Mah}.

On a related note, $B$ is the multiplicative inverse of the generating function $T$ for the PTM sequence:
$$T(x)=\sum_{n=0}^{\infty} t_n x^n = \prod_{n=0}^{\infty}(1-x^{2^{n}}) = \frac{1}{B(x)}.$$
 We are ready to prove the following

\begin{thm} \label{nonregular}
The subsequence $(h(2n))_{n\in\N}$ and thus the whole sequence ${\bf h}$ is not $k$-regular for any $k\in\N_{\geq 2}$.
\end{thm}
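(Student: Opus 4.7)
The plan is to show that $(h(2n))_{n\in\N}$ grows super-polynomially in $n$ and then apply the Allouche--Shallit polynomial growth theorem recalled in Section \ref{sec2}.

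The key input is Corollary \ref{rech_corollary}, which gives $h(2n) = \sum_{i=0}^{n} h(i)$. Since every $h(i)$ is positive, the subsequence $(h(2n))_{n\in\N}$ is strictly increasing. Restricting the sum to even-indexed summands and then applying monotonicity on the upper half of the summation range yields an inequality of the shape
$$ h(2n) \;\geq\; \sum_{j=0}^{\lfloor n/2 \rfloor} h(2j) \;\geq\; \frac{n}{4}\, h\!\left(2\lfloor n/4 \rfloor\right). $$
Specializing to $n = 4^k$ and iterating, the right-hand side telescopes to a bound of the form $h(2\cdot 4^k) \geq 2\cdot 4^{k(k-1)/2}$. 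Consequently $\log_2 h(2n)$ grows at least like $(\log_2 n)^2/4$, which is super-polynomial.

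Since every $k$-regular sequence with values in $\C$ satisfies $a_n = O(n^C)$ for some constant $C$, the super-polynomial growth of $(h(2n))_{n\in\N}$ rules out its $k$-regularity for every $k \geq 2$. Reading the same estimate as a lower bound on $h(m)$ along the even indices $m = 2n$ also rules out the $k$-regularity of the full sequence $\mathbf{h}$, which explains the ``thus'' in the statement.

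The only delicate step is obtaining the super-polynomial lower bound. What makes the argument essentially elementary is that Corollary \ref{rech_corollary} has already repackaged the self-referential recurrence as a partial-sum identity; attempting to control the nested term $h(n - h(n-1))$ directly, without this reduction, would be considerably harder.
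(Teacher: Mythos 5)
Your proof is correct, and it takes a genuinely different route from the paper's. The paper iterates the Mahler-type functional equation for $H(x)$ to obtain the decomposition $H(x)=B(x^2)+\sum_{i\geq 1}\overline{H}_i(x)$, observes that the tail has nonnegative coefficients, concludes $h(2n)\geq bin(n)$, and then invokes Mahler's asymptotic $\log_2 bin(n)\sim\tfrac12(\log_2 n)^2$ to rule out polynomial growth. You instead extract a super-polynomial lower bound directly from the partial-sum identity of Corollary \ref{rech_corollary} by an elementary bootstrapping: your inequality $h(2n)\geq\tfrac{n}{4}h(2\lfloor n/4\rfloor)$ checks out (the upper half of the even-indexed summands contributes at least $n/4$ terms each bounded below by $h(2\lfloor n/4\rfloor)$, using that $h(2(n+1))-h(2n)=h(n+1)>0$), and the iteration at $n=4^k$ correctly telescopes to $h(2\cdot 4^k)\geq 2\cdot 4^{k(k-1)/2}$, giving $\log_2 h(2n)\gtrsim\tfrac14(\log_2 n)^2$ along a subsequence, which already contradicts $a_n=O(n^C)$. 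Your argument is more self-contained — it needs no generating functions, no Mahler asymptotics, only Corollary \ref{rech_corollary} and positivity — at the cost of a slightly weaker constant ($\tfrac14$ versus $\tfrac12$ in the exponent, immaterial here). What the paper's heavier route buys is the identity $h(2n)=bin(n)+d(2n)$ itself, i.e., the structural link between $\mathbf{h}$ and binary partitions that is one of the main points of the paper, of which the non-regularity is merely a by-product. One cosmetic remark: the paper loosely calls the growth ``exponential,'' whereas your phrasing ``super-polynomial'' is the accurate one, since $2^{c(\log_2 n)^2}$ is subexponential.
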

\begin{proof}
The functional equation for $H$ can be rewritten as
$$
H(x)=\frac{1}{1-x^2}H(x^2)+\frac{x}{(1-x^2)^{2}}.
$$
Iterating this equation by induction on $k$ one can easily prove that
$$
H(x)=\left(\prod_{i=1}^{k}\frac{1}{1-x^{2^{i}}}\right)H(x^{2^{k}})+\sum_{i=1}^{k}\frac{x^{2^{i-1}}}{\prod_{j=1}^{i}(1-x^{2^{j}})}\cdot \frac{1}{1-x^{2^{i}}}.
$$
Thus, taking the limit $k\rightarrow +\infty$, we get the equality $H(x)=\overline{H}(x)+\sum_{i=1}^{\infty}\overline{H}_{i}(x)$, where
\begin{align*}
    \overline{H}(x)&=\prod_{i=1}^{\infty}\frac{1}{1-x^{2^{i}}}=B(x^2), \\
    \overline{H}_{i}(x)&=\frac{x^{2^{i-1}}}{\prod_{j=1}^{i}(1-x^{2^{j}})}\cdot \frac{1}{1-x^{2^{i}}}.
\end{align*}

Let us note that $n$-th coefficient in the power series expansion of $\overline{H}$ is equal to 0 if $n$ is odd and is just $bin(n/2)$ if the number $n$ is even. In particular, the growth of the coefficients is exponential.

On the other hand, it is clear that coefficients of the power series expansion of the second part in the above sum are non-negative. More precisely, write $\overline{H}_{i}(x)=\sum_{n=0}^{\infty}c_{i}(n)x^{i}$, where $c_{i}(n)$ is the number of partitions of $n-2^{i-1}$ into elements of the set $\{2,\ldots, 2^{i}\}$, where the last element $2^{i}$ can take two colors. Therefore, if we write
$$
\sum_{k=1}^{\infty}\overline{H}_{i}(x)=\sum_{n=0}^{\infty}d(n)x^{n},
$$
then $d(n)=\sum_{i=1}^{\lfloor\log_{2} n\rfloor+1}c_{i}(n-2^{i-1})$. Therefore, we have $h(2n)=bin(n)+d(2n)\geq bin(n)$ and the sequence $(h(2n))_{n\in\N}$ has exponential growth and can not be $k$-regular for any $k$. It is clear that the same is true for the whole sequence ${\bf h}$.
\end{proof}

To conclude this section, we consider the sequence $(h(n))_{n\in\N}$ reduced modulo $2$. Let us write $r(n)=h(n)\pmod*{2}$. As a consequence of the functional equation for $H$ we immediately deduce that this sequence is $2$-automatic.
\begin{thm}
The sequence $(r(n))_{n\in\N}$ is 2-automatic.
\end{thm}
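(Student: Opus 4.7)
The plan is to invoke Christol's theorem together with the algebraic equation for $G(x)=H(x)\pmod*{2}$ that was already derived in the proof of Corollary \ref{transH}. Concretely, the sequence $(r(n))_{n\in\N}$ takes values in $\mathbb{F}_2$, and its ordinary generating function is precisely $G(x)=\sum_{n=0}^{\infty} r(n)x^n \in \mathbb{F}_2[[x]]$. By Christol's theorem (the theorem cited earlier from \cite{Chr79}), it suffices to show that $G$ is algebraic over $\mathbb{F}_2(x)$.

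First I would recall the Mahler-type functional equation for $H$ from Theorem \ref{genfunh}, namely
\[
\frac{1}{1-x^2}H(x^2)-H(x)+\frac{x}{(1-x^2)^{2}}=0,
\]
and reduce it modulo $2$. Using that $G(x^2)\equiv G(x)^2\pmod*{2}$ in $\mathbb{F}_2[[x]]$, the reduced equation becomes exactly \eqref{G_algebraic_equation}:
\[
(1-x^2)G(x)^2+(1-x^2)^2G(x)+x=0.
\]
This exhibits $G(x)$ as a root of the polynomial $P(X,Y)=(1-X^2)Y^2+(1-X^2)^2Y+X\in\mathbb{F}_2[X,Y]$, so $G$ is algebraic over $\mathbb{F}_2(X)$.

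Finally I would apply Christol's theorem to conclude that $(r(n))_{n\in\N}$ is $2$-automatic. The only thing to be careful about is that Christol's theorem requires $G$ to be an actual algebraic power series, i.e.\ we must know $P(X,G(X))=0$ holds in $\mathbb{F}_2[[X]]$ and $P\neq 0$; both were already checked in Corollary \ref{transH} (irreducibility of $P$ was noted there, which in particular gives $P\neq 0$). There is therefore no substantive obstacle: once the algebraic relation \eqref{G_algebraic_equation} is in hand, the theorem follows in one line from Christol's criterion.
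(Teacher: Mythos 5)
Your proof is correct and follows exactly the paper's argument: reduce the Mahler-type functional equation for $H$ modulo $2$ to obtain the algebraic equation \eqref{G_algebraic_equation} for $G$ over $\mathbb{F}_2(x)$, then apply Christol's theorem. The extra care you take in verifying the hypotheses of Christol's theorem is fine but not a departure from the paper's route.
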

\begin{proof}
The generating function (over $\mathbb{F}_{2}(x))$ of the sequence $(r(n))_{n\in\N}$ is just $G(x)=H(x)\pmod*{2}$ and satisfies the algebraic equation \eqref{G_algebraic_equation} over $\mathbb{F}_{2}(x)$. From Christol's theorem, we get the result.
\end{proof}

By following the proof of Christol's theorem, one can derive from equation \eqref{G_algebraic_equation} a DFAO generating the sequence $(r(n))_{n\in\N}$. We are going to achieve the same goal through a more direct approach. Since $r(2n+1) \equiv n+1 \pmod{2}$, we are going to focus on the more interesting subsequence $(r(2n))_{n\in\N}$.

\begin{thm}
We have $(r(8n+4))_{n\in\N} = \overline{0,1,1,0}$ and $(r(16n+8))_{n\in\N} = \overline{0,0,1,1}$. Moreover, for all $n\in\N$ we have the following relations:
\begin{align*}
r(8n+2) &= 1-r(8n), \\
r(8n+6) &= r(8n+4), \\
r(32n) &= r(16n), \\
r(32n+16) &= 1-r(16n+8).
\end{align*}
\end{thm}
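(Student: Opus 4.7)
The plan rests on two consequences of Theorem \ref{rech} reduced modulo $2$: the additive recurrence $r(2n+2) \equiv r(2n) + r(n+1) \pmod{2}$ and the closed form $r(2n+1) \equiv n+1 \pmod{2}$, which in particular gives $r(4k+1) = 1$ and $r(4k+3) = 0$. The first two identities of the theorem follow at once: applying the recurrence with $n \mapsto 4n$ gives $r(8n+2) = r(8n) + r(4n+1) \equiv r(8n) + 1$, and with $n \mapsto 4n+2$ gives $r(8n+6) = r(8n+4) + r(4n+3) \equiv r(8n+4) \pmod{2}$.

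For the two periodic patterns I would compute first differences along the appropriate progressions by telescoping the basic recurrence four times. Writing $v(n) := r(8n+4)$, the computation
\[
v(n+1) - v(n) \equiv r(4n+3) + r(4n+4) + r(4n+5) + r(4n+6) \equiv 1 + r(2n+3) \equiv (n+1) \pmod{2}
\]
uses $r(4n+3) = 0$, $r(4n+5) = 1$ and the collapse $r(4n+4) + r(4n+6) \equiv r(2n+3)$ via the recurrence. Combined with $v(0) = r(4) = 0$, this forces $v$ to be periodic with period $4$ and pattern $(0,1,1,0)$, since four consecutive values of $(n+1) \bmod 2$ sum to $0 \pmod 2$.

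The same telescoping applied to $u(n) := r(8n)$ gives $u(n+1) - u(n) \equiv 1 + r(2n+2) \pmod{2}$; adding two consecutive such relations and invoking the basic recurrence once more collapses the expression to the cleaner identity
\[
u(n+2) - u(n) \equiv r(n+2) \pmod{2}.
\]
Iterating this along odd indices starting from $u(1) = r(8) = 0$ and substituting $r(2k+1) = (k+1) \bmod 2$ yields
\[
u(2m+1) \equiv \sum_{k=1}^{m} r(2k+1) \equiv \lfloor m/2 \rfloor \pmod{2},
\]
so $r(16n+8) = u(2n+1) = \lfloor n/2 \rfloor \bmod 2$, which is precisely the period-$4$ sequence $\overline{0,0,1,1}$.

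Finally, to deduce the two identities linking $r(32n)$ to $r(16n)$ and $r(32n+16)$ to $r(16n+8)$, I unfold the recurrence twice to obtain the auxiliary formula
\[
v(k) = r(8k+4) \equiv u(k) + r(4k) + k \pmod{2}.
\]
For $k = 2j$ this simplifies (using $r(8j) = u(j)$) to $v(2j) \equiv u(2j) + u(j) \pmod{2}$; specializing to $k = 4n$ and invoking the already-proven $v(4n) = 0$ gives $u(4n) = u(2n)$, i.e.\ $r(32n) = r(16n)$. Specializing instead to $k = 4n+2$, so that $r(4k) = r(16n+8) = u(2n+1)$, yields $v(4n+2) \equiv u(4n+2) + u(2n+1) \pmod{2}$, and combining with $v(4n+2) = 1$ gives $r(32n+16) = 1 - r(16n+8)$. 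The whole proof is essentially careful bookkeeping; the main delicate point is correctly tracking parities of intermediate indices in the telescoping computations of $v(n+1) - v(n)$ and $u(n+1) - u(n)$.
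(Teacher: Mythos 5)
Your proof is correct, but it takes a genuinely different route from the paper's in the key steps. You both start from the same two reductions of Theorem \ref{rech}, namely $r(2n+1)\equiv n+1$ and $r(2n+2)\equiv r(2n)+r(n+1)\pmod 2$, and the first two identities are obtained identically. The divergence is in how periodicity is established: the paper telescopes the recurrence sixteen steps at a time, recognizes the resulting sum as a combination of the alternating sums $F(3,\cdot)$ from Theorem \ref{thm_ptm}, and uses the Prouhet--Thue--Morse identity to conclude that $(r(8n+4))$ has period $4$, after which the actual pattern and the remaining identities are read off by short direct computations (e.g.\ $r(32n-2)=r(8(4n-1)+6)\equiv 0$). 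You instead telescope only four steps, compute the increment $v(n+1)-v(n)\equiv n+1\pmod 2$ explicitly, and sum; similarly for $u(n)=r(8n)$ you derive the second-difference identity $u(n+2)-u(n)\equiv r(n+2)$ and the auxiliary relation $r(8k+4)\equiv u(k)+r(4k)+k\pmod 2$, from which the last two identities drop out by specializing $k$. I checked the telescoping identities, the collapse $r(4n+4)+r(4n+6)\equiv r(2n+3)$, the evaluation $\sum_{k=1}^m(k+1)\equiv\lfloor m/2\rfloor\pmod 2$, and the auxiliary formula; all are correct, and the initial values $r(4)=0$, $r(8)=0$ match. Your argument is more elementary and self-contained (it does not invoke Theorem \ref{thm_ptm} at all, and it produces the exact patterns rather than first proving periodicity and then computing), whereas the paper's approach showcases the $F(k,n)$ machinery and the connection to the PTM sequence, which is one of the structural points of Section \ref{sec3}.
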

\begin{proof}
We are going to apply the relations
\begin{align}
r(2n+1) &\equiv n+1 \pmod{2}, \label{rec_mod2_1}  \\
r(2n+2) &\equiv r(2n) + r(n+1) \pmod{2}, \label{rec_mod2_2}
\end{align}
which follow directly from Theorem \ref{rech}.

We obtain the congruences
\begin{align*}
r(8n+2) &\equiv r(8n) + r(4n+1) \equiv r(8n) + 2n+1 \equiv r(8n) +1 \pmod{2}, \\
r(8n+6) &\equiv r(8n+4) + r(4n+3) \equiv r(8n+4) + 2n+2 \equiv r(8n+4) \pmod{2},
\end{align*}
which immediately give the first two claimed equalities.

We move on to the periodicity of $(r(8n+4))_{n\in\N}$ and $(r(16n+8))_{n\in\N}$. Applying \eqref{rec_mod2_2} repeatedly, we get for $n \in \N$ and $j\in\{0, 1, 2, 3\}$, the congruence
$$ r(32(n+1)+8j+4) \equiv r(32n+8j+4) + \sum_{i=0}^{15} r(16(n+1)+4j+2 -i) \pmod{2}.$$
Using the notation $F(k,n)$ as in \eqref{F_kn}, by Theorem \ref{thm_ptm} we get
\begin{align*}
    \sum_{i=0}^{15} r(16(n+1)+4j+2 -i) &\equiv F(3,16(n+1)+4j+2) + F(3,16(n+1)+4j+1) \\
    &\equiv F(2,8(n+1)+2j+1) \equiv 0 \pmod{2}.
\end{align*}
This proves that $(r(8n+4))_{n\in\N}$ has period $4$ and direct computation shows that $(r(8n+4))_{n\in\N} = \overline{0,1,1,0}$. Therefore, we also have $(r(8n+6))_{n\in\N} = \overline{0,1,1,0}$ so the congruence $r(16n+8) \equiv r(16n+6) + r(8n+4)$ implies $(r(16n+8))_{n\in\N} = \overline{0,0,1,1}$.

It remains to show the last two equalities. For $n=0$ we obviously have $r(32n) = r(16n)$, while for $n \geq 1$ we get
$$ r(32n) \equiv r(32n-2) + r(16n) \equiv r(8(4n-1)+6) + r(16n) \equiv  r(16n) \pmod{2}.$$
Similarly, for all $n \in \N$ we have
$$
r(32n+16) \equiv r(32n+14) + r(16n+8) \equiv  1 + r(16n+8) \pmod{2},
$$
and the proof is complete.
\end{proof}

From these relations, we can derive a DFAO generating the sequence $(r(2n))_{n\in\N}$ (reading the input starting with the least significant digit of $n$). By a simple modification one can also obtain a DFAO that generates the whole sequence $(r(n))_{n\in\N}$.

\begin{figure}[h!]
\centering
\begin{tikzpicture}
\node[state, initial](2n){$1$};
\node[state] at (1.25,1.25) (4n){$1$};
\node[state] at (1.25,-1.25) (4n_2){$0$};
\node[state] at (3,3) (8n){$1$};
\node[state] at (3,-3) (8n_2){$0$};
\node[state] at (2.5,0) (8n_4){$0$};
\node[state] at (3.75,-1.25) (16n_4){$1$};
\node[state] at (3.75,1.25) (16n_12){$0$};
\node[state] at (6.25,3) (16n_8){$0$};
\node[state] at (6.25,-3) (16n_10){$1$};
\node[state] at (9,3) (16n){$1$};
\node[state] at (9,-3) (16n_2){$0$};
\node[state] at (6.25,1.25) (32n_12){$0$};
\node[state] at (6.25,-1.25) (32n_4){$1$};

\draw
(2n) edge[above left] node{$0$} (4n)
(2n) edge[below left] node{$1$} (4n_2)
(4n) edge[above left] node{$0$} (8n)
(4n) edge[above right] node{$1$} (8n_4)
(4n_2) edge[below left] node{$0$} (8n_2)
(4n_2) edge[below right] node{$1$} (8n_4)
(8n) edge[bend left, above] node{$0$} (16n)
(8n) edge[above] node{$1$} (16n_8)
(8n_2) edge[bend right, below] node{$0$} (16n_2)
(8n_2) edge[below] node{$1$} (16n_10)
(8n_4) edge[below left] node{$0$} (16n_4)
(8n_4) edge[above left] node{$1$} (16n_12)
(16n) edge[loop above] node{$0$} (16n)
(16n) edge[bend left=45, left] node{$1$} (16n_10)
(16n_2) edge[loop below] node{$0$} (16n_2)
(16n_2) edge[bend right=45, left] node{$1$} (16n_8)
(16n_4) edge[below] node{$0$} (32n_4)
(16n_4) edge[below right, pos=0.25] node{$1$} (32n_12)
(16n_8) edge[above left] node{$0,1$} (16n_12)
(16n_10) edge[below left] node{$0,1$} (16n_4)
(16n_12) edge[above] node{$0$} (32n_12)
(16n_12) edge[above right, pos=0.25] node{$1$} (32n_4)
(32n_4) edge[loop right] node{$0,1$} (32n_4)
(32n_12) edge[loop right] node{$0,1$} (32n_12);
\end{tikzpicture}
\caption{A DFAO that generates the sequence $(r(2n))_{n\in\N}$.}
\label{r_DFAO}
\end{figure}

\section{Analysis of the sequence ${\bf h}_{1,b}$ with $b\geq 2$}\label{sec4}

The analysis performed in Section \ref{sec3} (more precisely, Theorem \ref{nonregular}) revealed a strong connection of an even indexed subsequence of ${\bf h}$ with sequences counting certain binary partitions. Moreover, we proved that the sequence ${\bf h}$ is neither $k$-regular for any $k$ nor a linear recurrence sequence. We show that a similar phenomenon holds for the sequence ${\bf h}_{1,b}$ with $b\geq 2$. Based on our experience with the sequence ${\bf h}$ and numerical computations, we have observed the following generalization of Theorem \ref{rech}.

\begin{thm}\label{rechb}
For all $n\in\N$ we have the recurrence relations
\begin{align*}
  h_{1,b}(2n)&=n+1, \\
  h_{1,b}(2n+1) &= h_{1,b}(n)+h_{1,b}(2n-1), \quad n \geq 1
\end{align*}
and the identity
$$h_{1,b}(2n+1)= b-1 + \sum_{i=0}^{n}h_{1,b}(i).$$
\end{thm}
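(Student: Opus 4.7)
The proof should mirror, with minor modifications, the argument used for Theorem \ref{rech}. The plan is to prove the first two displayed formulas simultaneously by strong induction on $n$, together with an auxiliary lower bound which guarantees that the nested index $n - h_{1,b}(n-1)$ in the defining recurrence always falls to a non-positive value exactly when we want it to.

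More precisely, I would show by simultaneous strong induction on $n \geq 0$ the three statements
\[
\textbf{(a)}\ h_{1,b}(2n) = n+1, \qquad \textbf{(b)}\ h_{1,b}(2n+1) \geq 2n+2, \qquad \textbf{(c)}\ h_{1,b}(2n+1) = h_{1,b}(n) + h_{1,b}(2n-1)\ \text{for } n\geq 1.
\]
For the base cases, \textbf{(a)} and \textbf{(b)} for $n=0$ follow from the initial conditions $h_{1,b}(0)=1$, $h_{1,b}(1)=b\geq 2$. For $n=1$ one checks $h_{1,b}(2)=h_{1,b}(2-b)+h_{1,b}(0)=1+1=2$ using $b\geq 2$, and then $h_{1,b}(3)=h_{1,b}(1)+b=2b\geq 4$.

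In the inductive step, assuming the three statements for all values smaller than $n\geq 2$, one reads the defining recurrence
\[
h_{1,b}(2n) = h_{1,b}\bigl(2n - h_{1,b}(2n-1)\bigr) + h_{1,b}(2n-2).
\]
The inductive hypothesis \textbf{(b)} for $n-1$ gives $h_{1,b}(2n-1)\geq 2n$, so the nested index is $\leq 0$ and its value is $1$; together with \textbf{(a)} for $n-1$ this yields $h_{1,b}(2n)=1+n=n+1$, proving \textbf{(a)} for $n$. Substituting this into the recurrence for index $2n+1$ gives $h_{1,b}(2n+1)=h_{1,b}(n)+h_{1,b}(2n-1)$, which is \textbf{(c)}. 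For \textbf{(b)} at $n$, distinguish the parity of $n$: if $n=2k$ with $k\geq 1$ then \textbf{(a)} gives $h_{1,b}(n)=k+1$ and \textbf{(b)} gives $h_{1,b}(4k-1)\geq 4k$, so $h_{1,b}(2n+1)\geq 5k+1\geq 4k+2=2n+2$; if $n=2k+1$ then \textbf{(b)} gives $h_{1,b}(n)\geq 2k+2$ and $h_{1,b}(4k+1)\geq 4k+2$, hence $h_{1,b}(2n+1)\geq 6k+4\geq 4k+4=2n+2$.

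Once \textbf{(a)}--\textbf{(c)} are established, the final identity follows by an immediate induction on $n$ using \textbf{(c)}: the base $n=0$ is $h_{1,b}(1)=b=(b-1)+h_{1,b}(0)$, and the inductive step adds $h_{1,b}(n)$ to both sides of the previous instance. The only genuine obstacle is bookkeeping in the lower bound \textbf{(b)}, specifically verifying that $h_{1,b}(n)$ is large enough (never less than $1$, and indeed at least roughly $n/2$) so that the sum $h_{1,b}(n)+h_{1,b}(2n-1)$ exceeds $2n+2$; this is precisely what forces the treatment of the small cases $n=0,1$ separately and why the hypothesis $b\geq 2$ enters at the first step.
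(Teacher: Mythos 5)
Your proposal is correct and follows essentially the same route as the paper: a simultaneous induction establishing $h_{1,b}(2n)=n+1$ together with the lower bound $h_{1,b}(2n+1)\geq 2n+2$ (which forces the nested index $2n-h_{1,b}(2n-1)$ to be non-positive), with the lower bound verified by splitting on the parity of $n$, and the summation identity then obtained by a short separate induction. The only differences are cosmetic (your indexing of the auxiliary inequality and the explicit base-case bookkeeping), so there is nothing to change.
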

\begin{proof}
The proof is by induction on $n$ and is very similar to the proof of Theorem \ref{rech}. More precisely, we first simultaneously prove that $h_{1,b}(2n)=n+1$ and $h_{1,b}(2n-1)\geq 2n$. This is clearly true for $n=0, 1, 2$. Suppose that this is true for all positive integers $\leq n-1$. We then have $$h_{1,b}(2n)=h_{1,b}(2n-h_{1,b}(2n-1))+h_{1,b}(2n-2)=1+n$$
and
$$h_{1,b}(2n+1)=h_{1,b}(2n+1-h_{1,b}(2n))+h_{1,b}(2n-1)=h_{1,b}(n)+h_{1,b}(2n-1).$$ If $n$ is odd then we have
$$
h_{1,b}(2n+1)\geq h_{1,b}\left(2\frac{n+1}{2}-1\right)+2n\geq 2\frac{n+1}{2}+2n=3n+1\geq 2n+2.
$$
If $n$ is even then
$$h_{1,b}(n)+h_{1,b}(2n-1)\geq \frac{n}{2}+1+2n\geq 2n+2$$
and we are done.

To get an expression for $h_{1,b}(2n+1)$ in the form of a sum, we again use induction. The statement is true for $n=0$. Suppose that it holds for $n-1$, where $n \geq 1$. Then
\begin{align*}
h_{1,b}(2n+1)&=h_{1,b}(2n+1-h_{1,b}(2n))+h_{1,b}(2n-1)=h_{1,b}(n)+h_{1,b}(2n-1)\\
             &=h_{1,b}(n)+\sum_{i=0}^{n-1}h_{1,b}(i)+b-1=\sum_{i=0}^{n}h_{1,b}(i)+b-1,
\end{align*}
and the result follows.
\end{proof}

Using the recurrence relation satisfied by the sequence ${\bf h}_{1,b}$ and performing the same type of reasoning as in the proof of Theorem \ref{genfunh} we get the following.

\begin{thm}\label{genfunhb}
Let $b\in\N_{\geq 2}$ and put $H_{b}(x)=\sum_{n=0}^{\infty}h_{1,b}(n)x^{n}$. Then the series $H_{b}$ satisfies the functional equation
$$
H_{b}(x)=\frac{x}{1-x^{2}}H_{b}(x^2)+\frac{(b-1)x}{1-x^2}+\frac{1}{(1-x^2)^{2}}.
$$
\end{thm}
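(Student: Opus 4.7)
The plan is to mimic the proof of Theorem \ref{genfunh}, splitting $H_b$ into its even-indexed and odd-indexed parts and using the closed-form expressions for $h_{1,b}(2n)$ and $h_{1,b}(2n+1)$ furnished by Theorem \ref{rechb}. Note that compared to the $b=1$ case, the roles of the even and odd subsequences are swapped: for ${\bf h}_{1,b}$ with $b \geq 2$ the arithmetic progression sits at even indices, while the partial sum expression is at odd indices.

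First, I would write
$$ \tfrac{1}{2}(H_b(x) + H_b(-x)) = \sum_{n=0}^\infty h_{1,b}(2n)\, x^{2n}, \qquad \tfrac{1}{2}(H_b(x) - H_b(-x)) = \sum_{n=0}^\infty h_{1,b}(2n+1)\, x^{2n+1}. $$
For the even part, Theorem \ref{rechb} gives $h_{1,b}(2n) = n+1$, so this sum equals $\sum_{n\geq 0}(n+1) x^{2n} = \frac{1}{(1-x^2)^2}$. For the odd part, I would apply the identity $h_{1,b}(2n+1) = (b-1) + \sum_{i=0}^{n} h_{1,b}(i)$ from Theorem \ref{rechb} and split the resulting series:
$$ \sum_{n=0}^\infty h_{1,b}(2n+1)\, x^{2n+1} = (b-1)\sum_{n=0}^\infty x^{2n+1} + \sum_{n=0}^\infty \Bigl(\sum_{i=0}^n h_{1,b}(i)\Bigr) x^{2n+1}. $$
The first sum equals $\frac{(b-1)x}{1-x^2}$. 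For the second, I would invoke the standard power-series identity $\sum_{n\geq 0}\bigl(\sum_{i=0}^n a_i\bigr) y^n = \frac{A(y)}{1-y}$ with $A = H_b$, substitute $y = x^2$, and multiply by $x$ to obtain $\frac{x\, H_b(x^2)}{1-x^2}$.

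Finally, adding the even and odd parts yields
$$ H_b(x) = \frac{1}{(1-x^2)^2} + \frac{(b-1)x}{1-x^2} + \frac{x}{1-x^2} H_b(x^2), $$
which is the claimed functional equation.

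There is essentially no genuine obstacle: the argument is purely mechanical once the recursive formulas of Theorem \ref{rechb} are in hand. The only point requiring mild care is the bookkeeping in the odd-part computation, specifically the substitution $y \mapsto x^2$ accompanied by the extra factor of $x$ needed to realign indices, and making sure the constant $b-1$ contributes its own geometric series rather than being absorbed into $H_b$.
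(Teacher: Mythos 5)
Your proof is correct and follows exactly the route the paper intends: the paper omits the proof, stating only that it proceeds by ``the same type of reasoning as in the proof of Theorem \ref{genfunh}'', which is precisely your even/odd decomposition combined with the formulas of Theorem \ref{rechb}. The computations check out, including the $n=0$ case of the partial-sum identity.
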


Analogously to Corollary \ref{transH}, we get that $H_b$ is transcendental.

\begin{cor}\label{transHb}
The generating function $H_{b}(x)=\sum_{n=0}^{\infty}h_{1,b}(n)x^{n}$ of the sequence ${\bf h}_{1,b}$ is transcendental over $\Q(x)$.
\end{cor}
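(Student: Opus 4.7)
The plan is to follow the template of Corollary \ref{transH}: apply Nishioka's theorem to reduce transcendence to non-rationality, then establish non-rationality by reduction modulo $2$.

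Solving the functional equation of Theorem \ref{genfunhb} for $H_{b}(x^{2})$ yields
$$
H_{b}(x^{2}) = \frac{1-x^{2}}{x}H_{b}(x) - (b-1) - \frac{1}{x(1-x^{2})} = \phi(x, H_{b}(x)),
$$
with $\phi \in \C(x,y)$. By Nishioka's theorem (as invoked in Corollary \ref{transH}), $H_{b}$ is either rational or transcendental over $\C(x)$, so it suffices to rule out rationality. Setting $G_{b}(x) = H_{b}(x) \pmod*{2} \in \F_{2}[[x]]$, multiplying the functional equation by $(1-x^{2})^{2}$ and using $G_{b}(x^{2}) = G_{b}(x)^{2}$ in characteristic $2$ gives
$$
x(1+x)^{2}\,G_{b}(x)^{2} + (1+x)^{4}\,G_{b}(x) + \bar{b}\,x(1+x)^{2} + 1 = 0,
$$
where $\bar{b} = (b-1) \bmod 2$. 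Thus it remains to show the quadratic $P_{b}(x,y) = x(1+x)^{2}y^{2} + (1+x)^{4}y + \bar{b}\,x(1+x)^{2} + 1$ is irreducible over $\F_{2}(x)$, i.e.\ has no root there.

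A standard Artin-Schreier reduction (divide by the leading coefficient, then substitute $y = ((1+x)^{2}/x)\,z$) converts the question into whether the rational function
$$
\alpha(x) = \bar{b}\,\frac{x^{2}}{(1+x)^{4}} + \frac{x}{(1+x)^{6}}
$$
belongs to the image of $z \mapsto z^{2}+z$ on $\F_{2}(x)$. Changing variable to $u = 1+x$, a short computation yields $\alpha = \bar{b}(u^{-2}+u^{-4}) + u^{-5} + u^{-6}$. Writing $z = \sum_{i} c_{i} u^{i}$, the coefficient of $u^{j}$ in $z^{2}+z$ equals $c_{j}$ when $j$ is odd and $c_{j/2}+c_{j}$ when $j$ is even. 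Matching the $u^{-5}$ coefficient forces $c_{-5}=1$, and the relations at $j = -5 \cdot 2^{k+1}$ (where $[u^{j}]\alpha = 0$ for $k \geq 0$) then inductively give $c_{-5 \cdot 2^{k}} = c_{-5 \cdot 2^{k-1}} = \cdots = c_{-5} = 1$. This produces infinitely many nonzero principal-part coefficients at $u=0$, contradicting $z \in \F_{2}(x)$. Hence $G_{b}$ is not rational, so $H_{b}$ is not rational, and Nishioka's theorem delivers transcendence.

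The main obstacle is this Artin-Schreier verification, and a priori one might worry that the two parities of $b$ require separate handling. The saving feature is that the extra contribution $\bar{b}(u^{-2}+u^{-4})$ appearing when $b$ is even sits entirely at even negative orders and does not interact with the odd-order $u^{-5}$ coefficient that drives the contradiction, so a single uniform argument covers both cases.
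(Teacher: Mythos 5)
Your proposal is correct and follows essentially the same route as the paper: Nishioka's theorem reduces transcendence to non-rationality, which is then established by reducing modulo $2$ and showing the resulting quadratic $x(1-x^2)y^2-(1-x^2)^2y+(b-1)x(1-x^2)+1$ has no root in $\F_{2}(x)$. The only difference is that you actually carry out the irreducibility verification (which the paper dismisses as easy to check) via a clean Artin--Schreier/Laurent-expansion argument at $u=1+x$, and your computation there is correct, including the observation that the parity of $b$ only contributes even-order terms that do not interfere with the $u^{-5\cdot 2^{k}}$ coefficients driving the contradiction.
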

 \begin{proof}
The proof goes exactly as in the case of Corollary \ref{transH}. This time, to apply the Nishioka theorem, we note that $\phi_{b}(x, H_{b}(x))=H_b(x^2)$, where
$$
\phi_{b}(x,y)=\frac{1-x^2}{x}y+1-b-\frac{1}{x(1-x^2)}.
$$
As before, it is enough to prove non-rationality of the series $H_{b}$. Again, we consider $G_{b}=H_{b}\pmod*{2}$. Note that $G_{b}$ satisfies $f_{b}(x,G_{b}(x))=0$ (in $\mathbb{F}_{2}[[x]]$) for
$$
f_{b}(x,y)=x(1-x^2)y^2-(1-x^2)^2y+(b-1)x(1-x^2)+1.
$$ However, one can easily check that the polynomials $f_b$ are irreducible in $\mathbb{F}_{2}[x,y]$ and thus $G_{b}$ is not a rational function in $\mathbb{F}_{2}(x)$ for any value of $b\in\N_{+}$. Thus, $H_{b}$ cannot be a rational function over $\Q(x)$.
\end{proof}
As an application of Corollary \ref{genfunhb} we get the following (a bit) unexpected result, linking the sequences $\mathbf{h}_{1,b}$ for various $b$.

\begin{thm}\label{h1bbin}
For all $n \in \N$ we have
$$ h_{1,2}(n) = h_{1,1}(n+1)  $$
More generally, for $b\in\N_{\geq 2}$ and $n\in\N$ we have the identity
$$
h_{1,b}(2n+1)=(b-2)bin(n+1)+h_{1,1}(2n+2).
$$
\end{thm}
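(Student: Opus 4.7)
The plan is to prove the second (more general) identity by induction on $n$ and then recover the first identity as a corollary. For the first identity, Theorems~\ref{rech} and~\ref{rechb} directly yield $h_{1,2}(2m)=m+1=h_{1,1}(2m+1)$, which settles the even indices; the odd-index identity $h_{1,2}(2m+1)=h_{1,1}(2m+2)$ is exactly the $b=2$ case of the second identity. Hence everything reduces to showing that the error term $d_b(n):=h_{1,b}(2n+1)-h_{1,1}(2n+2)$ equals $(b-2)\,bin(n+1)$.

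To set up the induction, I would first check the base case $n=0$: $d_b(0)=b-2=(b-2)\,bin(1)$. For the inductive step, I would combine the odd-index relation $h_{1,b}(2n+3)=h_{1,b}(n+1)+h_{1,b}(2n+1)$ (Theorem~\ref{rechb} shifted by one) with the even-index relation $h_{1,1}(2n+4)=h_{1,1}(n+2)+h_{1,1}(2n+2)$ (Theorem~\ref{rech} shifted by one). Subtracting gives
\[ d_b(n+1) = d_b(n) + \bigl(h_{1,b}(n+1) - h_{1,1}(n+2)\bigr). \]

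Next I would split by the parity of $n+1$. When $n+1=2m$, both $h_{1,b}(2m)$ and $h_{1,1}(2m+1)$ equal $m+1$, so the bracket vanishes and $d_b(2m)=d_b(2m-1)$. When $n+1=2m+1$, the bracket is precisely $h_{1,b}(2m+1)-h_{1,1}(2m+2)=d_b(m)$, giving $d_b(2m+1)=d_b(2m)+d_b(m)$. To close the induction I would compare with the defining relations $bin(2m+1)=bin(2m)$ and $bin(2m+2)=bin(2m+1)+bin(m+1)$ recalled in the paragraph preceding the theorem; multiplying through by $b-2$ shows that $(b-2)\,bin(n+1)$ satisfies the same pair of recurrences and matches at $n=0$, completing the induction.

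The main conceptual obstacle is essentially absent once one notices the parity-shift observation $h_{1,b}(2m)=m+1=h_{1,1}(2m+1)$, which is exactly what makes the bracketed difference vanish in the even case and turns the odd case into a recursion on $d_b$ itself. The only real effort is careful bookkeeping of the index shifts when applying the recurrences from Theorems~\ref{rech} and~\ref{rechb}.
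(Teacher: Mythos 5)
Your proof is correct, but it takes a genuinely different route from the paper's. The paper proves the key identity $h_{1,b+1}(2n+1)-h_{1,b}(2n+1)=bin(n+1)$ via generating functions: it shows that $W(x)=H_{b+1}(x)-H_b(x)$ and $U(x)=\sum_{n\geq 0}bin(n+1)x^{2n+1}$ satisfy the same Mahler-type functional equation (using Theorem~\ref{genfunhb} and $(1-x)B(x)=B(x^2)$) with the same constant term, and then telescopes over $b$ from $2$ to $b-1$, invoking $h_{1,2}(2n+1)=h_{1,1}(2n+2)$ to land on the stated form. You instead work entirely at the level of sequences: setting $d_b(n)=h_{1,b}(2n+1)-h_{1,1}(2n+2)$, the shifted recurrences from Theorems~\ref{rech} and~\ref{rechb} give $d_b(n+1)=d_b(n)+\bigl(h_{1,b}(n+1)-h_{1,1}(n+2)\bigr)$, and the parity split correctly yields $d_b(2m)=d_b(2m-1)$ (the bracket vanishes since $h_{1,b}(2m)=m+1=h_{1,1}(2m+1)$) and $d_b(2m+1)=d_b(2m)+d_b(m)$, which is exactly the recursion defining $(b-2)bin(n+1)$; with the base case $d_b(0)=b-2$ the strong induction closes. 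I verified the index bookkeeping and the well-foundedness of the induction (each step references strictly smaller indices). Your approach is more elementary --- it avoids generating functions and the telescoping over $b$ entirely, handling all $b\geq 2$ at once --- while the paper's approach reuses the functional-equation machinery it has already built (and which it needs anyway for the transcendence results). Both are complete; yours could serve as a self-contained alternative proof.
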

\begin{proof}
The first claim holds for $n=0,1$ by definition, while for $n \geq 2$ it follows from the fact that the sequences $\mathbf{h}_{1,1}$ and $\mathbf{h}_{1,2}$ are defined by the same recurrence.

To obtain the second required equality, we use the generating function approach and first prove that $h_{1,b+1}(2n+1)-h_{1,b}(2n+1)=bin(n+1)$. Let us note that by Corollary \ref{genfunhb}, the series $W(x)=H_{b+1}(x)-H_{b}(x)$ satisfies the functional equation
$$
W(x)=\frac{x}{1-x^2}W(x^2)+\frac{x}{1-x^2}.
$$
Due to the equality $h_{1,b}(2n)=n+1$ we get that $W(x)=-W(-x)$. Next, let us denote $U(x)=\sum_{n=0}^{\infty}bin(n+1)x^{2n+1}$ and consider $B(x)=\sum_{n=0}^{\infty}bin(n)x^{n}$. Note that $B(x^2)=xU(x)+1$ and recall the functional equation for $B$ has the form $(1-x)B(x)=B(x^2)$. Thus, the series $U(x)$ satisfies
the functional equation
$$
U(x)=\frac{x}{1-x^2}U(x^2)+\frac{x}{1-x^2},
$$
which is precisely the equation satisfied by $W(x)$. Because $W(0)=U(0)=1$ we have $W(x)=U(x)$ and hence the coefficients in both series are equal.

To get the general statement, we sum the equalities $h_{1,k+1}(2n+1)-h_{1,k}(2n+1)=bin(n+1)$ from $k=2$ to $k=b-1$, and get
$$
h_{1,b}(2n+1)-h_{1,2}(2n+1)=\sum_{k=2}^{b-1}(h_{1,k+1}(2n+1)-h_{1,k}(2n+1))=(b-2)bin(n+1).
$$
Using the equality $h_{1,2}(2n+1)=h_{1,1}(2n+2)$ proved earlier, we get the result.

\end{proof}

\section{Analysis of the sequence ${\bf h}_{a,b}$ with $a \geq 2$}\label{sec5}

The behavior of the sequence $\mathbf{h}_{a,b}$ in the remaining case $a \geq 2$ is rather trivial, as in each case the subsequences $h_{a,b}(2n)$ and $h_{a,b}(2n+1)$ are eventually arithmetic progressions. In the following, we provide explicit formulas depending on $a,b$.

\begin{thm}  \label{thm:h_a>=2_b>=1}
Assume that $a \geq 2 $ and $b \geq 2$. Then for all $n \in \N$ we have
\begin{align*}
h_{a,b}(2n) &= (n+1)a, \\
h_{a,b}(2n+1) &= na+b.
\end{align*}
\end{thm}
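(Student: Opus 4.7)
The plan is to prove both formulas simultaneously by induction on $n$, exploiting the fact that under the assumptions $a\geq 2$ and $b\geq 2$, the values $h_{a,b}(m)$ grow rapidly enough that the nested argument $m - h_{a,b}(m-1)$ always falls into the region of indices $\leq 0$. In that region the sequence is constant equal to $a$, so the recurrence collapses to the simple relation $h_{a,b}(m) = a + h_{a,b}(m-2)$, from which the claimed arithmetic progressions follow immediately.

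First I would dispose of the base cases: by definition $h_{a,b}(0) = a$ and $h_{a,b}(1) = b$, matching the formulas at $n=0$. A direct computation of $h_{a,b}(2)$ and $h_{a,b}(3)$ using $b \geq 2$ and $a \geq 2$ (so that $2-b \leq 0$ and $3-2a \leq -1$) yields $h_{a,b}(2) = 2a$ and $h_{a,b}(3) = a + b$, again consistent with the formulas.

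For the inductive step, assuming the formulas hold for all indices strictly less than $2n$ (with $n \geq 1$), I compute
\begin{align*}
2n - h_{a,b}(2n-1) &= 2n - (n-1)a - b = n(2-a) + (a-b),\\
2n+1 - h_{a,b}(2n) &= 2n+1 - (n+1)a = n(2-a) + (1-a).
\end{align*}
Both expressions are non-positive whenever $a \geq 2$ and $b \geq 2$: the first because $n(2-a) \leq 0$ and $a - b \leq 0$ when $a = 2$, while for $a \geq 3$ the term $n(2-a) \leq 2-a$ dominates $a-b$; the second because $n(2-a) \leq 0$ and $1-a \leq -1$. Consequently both nested terms evaluate to $a$, and the recurrence gives
\begin{align*}
h_{a,b}(2n) &= a + h_{a,b}(2n-2) = a + na = (n+1)a,\\
h_{a,b}(2n+1) &= a + h_{a,b}(2n-1) = a + (n-1)a + b = na + b,
\end{align*}
completing the induction.

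The argument is essentially bookkeeping, and the only potential obstacle is ensuring that the two inequalities $2n - h_{a,b}(2n-1) \leq 0$ and $2n+1 - h_{a,b}(2n) \leq 0$ hold uniformly in $n \geq 1$ and in the regime $a, b \geq 2$; this is precisely where the hypothesis $a \geq 2$ is crucial (for $a = 1$ the linear term in $n$ vanishes and the argument is no longer forced into the non-positive regime, which is why the case $a=1$ exhibits genuinely richer behavior, as analyzed in Sections \ref{sec3}--\ref{sec4}).
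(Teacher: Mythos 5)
Your proposal is correct and follows essentially the same route as the paper: a simultaneous induction on $n$ in which one checks that $2n-h_{a,b}(2n-1)\leq 0$ and $2n+1-h_{a,b}(2n)\leq 0$, so that each nested term collapses to the constant value $a$. The extra explicit verification of $h_{a,b}(2)$ and $h_{a,b}(3)$ and the case split on $a=2$ versus $a\geq 3$ are fine but not needed beyond what the paper's one-line inequality already covers.
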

\begin{proof}
We prove both equalities simultaneously by induction on $n$. The case $n=0$ is contained in the definition of $\mathbf{h}_{a,b}$. For $n \geq 1$ we have
$$ h_{a,b}(2n) = h_{a,b}(2n-h_{a,b}(2n-1))+ h_{a,b}(2n-2) = a + na = (n+1)a,$$
where we have used
$$2n-h_{a,b}(2n-1) = 2n - (n-1)a - b \leq 0.$$
Similarly,
$$ h_{a,b}(2n+1) = h_{a,b}(2n+1-h_{a,b}(2n))+ h_{a,b}(2n-1) = a + (n-1)a+b = na+b.  $$
\end{proof}

In the following results, the proofs also rely on easy induction and are omitted.

\begin{thm}  \label{thm:h_a>=3_b=1}
Assume that $a \geq 3$ and $b = 1$. Then $h_{a,1}(0)=a$ and for all $n \in \N_+$ we have
$$ h_{a,1}(n) = \left \lfloor \frac{n}{2} \right\rfloor a+1.  $$
\end{thm}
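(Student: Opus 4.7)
The plan is to prove the formula by strong induction on $n$, exploiting the fact that for $a \geq 3$ the nested index $n - h_{a,1}(n-1)$ quickly becomes non-positive, so the inner term $h_{a,1}(n-h_{a,1}(n-1))$ collapses to the initial value $a$. This turns the defining recurrence into the elementary relation $h_{a,1}(n) = a + h_{a,1}(n-2)$ for all sufficiently large $n$, which immediately yields the desired closed form.

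For the base cases I would verify the formula directly from the recurrence. The values $h_{a,1}(0)=a$ and $h_{a,1}(1)=1$ are given by definition; then
$$h_{a,1}(2) = h_{a,1}(2-h_{a,1}(1)) + h_{a,1}(0) = h_{a,1}(1) + a = 1+a,$$
and, since $2-a \leq 0$,
$$h_{a,1}(3) = h_{a,1}(2-a) + h_{a,1}(1) = a + 1,$$
both agreeing with $\lfloor n/2 \rfloor a + 1$.

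For the inductive step, fix $n \geq 4$ and assume the formula has been proved for all $1 \leq m < n$. Writing $n-1 = 2q + r$ with $r \in \{0,1\}$, the inductive hypothesis gives $h_{a,1}(n-1) = qa + 1$, so
$$n - h_{a,1}(n-1) = (2q+r+1) - qa - 1 = q(2-a) + r.$$
Since $n \geq 4$ forces $q \geq 1$ and $a \geq 3$ gives $2 - a \leq -1$, this quantity is at most $(2-a)+1 = 3 - a \leq 0$. Therefore $h_{a,1}(n-h_{a,1}(n-1)) = a$ by the convention that $h_{a,1}(m) = a$ for $m \leq 0$, and
$$h_{a,1}(n) = a + h_{a,1}(n-2) = a + \lfloor (n-2)/2 \rfloor a + 1 = \lfloor n/2 \rfloor a + 1,$$
closing the induction.

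The only step requiring care is the inequality $n - h_{a,1}(n-1) \leq 0$, and this is precisely where the assumption $a \geq 3$ is essential: for $a=2$ one would have $h_{2,1}(3)=3$ and hence $n - h_{2,1}(n-1) = 1$ at $n=4$, so the inner term no longer collapses to $a$ and both the argument and the closed form break down. Apart from verifying this bound, the induction is purely mechanical, which is why the author remarks that the proof can be omitted.
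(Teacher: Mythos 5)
Your proof is correct and is exactly the ``easy induction'' that the paper omits for this theorem: the key point, that $n - h_{a,1}(n-1) = q(2-a)+r \leq 0$ for $n \geq 4$ so the nested term collapses to $a$ and the recurrence becomes $h_{a,1}(n) = a + h_{a,1}(n-2)$, is verified cleanly, and your remark on why $a \geq 3$ is needed matches the separate treatment of $a=2$ in Theorem \ref{thm:h_a=2_b=1}.
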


\begin{thm}  \label{thm:h_a=2_b=1}
We have $h_{2,1}(0)=2, h_{2,1}(1)=1, h_{2,1}(2)=h_{2,1}(3)=3$ and for all $n \in \N_{\geq 2}$ the equalities
\begin{align*}
h_{2,1}(2n) &= 3n-2,\\
h_{2,1}(2n+1) &= 2n.
\end{align*}
\end{thm}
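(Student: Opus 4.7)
The plan is to proceed by strong induction on $n$, proving the two equalities simultaneously. First I would verify by direct recursive computation that $h_{2,1}(2)=h_{2,1}(3)=3$, $h_{2,1}(4)=4$, $h_{2,1}(5)=4$, $h_{2,1}(6)=7$, $h_{2,1}(7)=6$, which establishes the formulas for $n=2,3$ (the required base case is slightly larger than usual since the inductive step will need access to $h_{2,1}(2(n-1))$ via the stated formula, hence $n-1\geq 2$).

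For the inductive step, fix $n\geq 4$ and assume the formulas hold for all indices strictly below $2n$. For the even case, by the inductive hypothesis $h_{2,1}(2n-1)=2(n-1)=2n-2$, so
\begin{equation*}
h_{2,1}(2n)=h_{2,1}\bigl(2n-h_{2,1}(2n-1)\bigr)+h_{2,1}(2n-2)=h_{2,1}(2)+h_{2,1}(2(n-1))=3+(3(n-1)-2)=3n-2,
\end{equation*}
using $h_{2,1}(2)=3$ and the induction hypothesis for $2(n-1)$. For the odd case, having just established $h_{2,1}(2n)=3n-2$, we compute
\begin{equation*}
2n+1-h_{2,1}(2n)=2n+1-(3n-2)=3-n\leq 0,
\end{equation*}
since $n\geq 3$, and therefore by the convention $h_{2,1}(m)=2$ for $m\leq 0$ we obtain
\begin{equation*}
h_{2,1}(2n+1)=h_{2,1}(3-n)+h_{2,1}(2n-1)=2+(2n-2)=2n.
\end{equation*}

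There is no real obstacle here: the key computational observations are that $2n-h_{2,1}(2n-1)$ collapses to the fixed index $2$ and $2n+1-h_{2,1}(2n)$ lands in the range $\leq 0$ where the sequence is constantly equal to $a=2$. The only point requiring attention is the choice of base case, because the formula $h_{2,1}(2n+1)=2n$ fails at $n=1$ (we have $h_{2,1}(3)=3$, not $2$), so one must verify the formulas directly for small $n$ before launching the induction from $n\geq 4$.
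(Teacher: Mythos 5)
Your proof is correct, and it follows exactly the route the paper indicates (the paper states that the proof ``relies on easy induction'' and omits the details, which you have filled in accurately): the base cases $n=2,3$ check out, the nested index $2n-h_{2,1}(2n-1)$ collapses to $2$, and $2n+1-h_{2,1}(2n)=3-n\leq 0$ falls into the range where the sequence equals $a=2$.
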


\section{Analysis of the sequence $\mathbf{g}_{a,b}$}\label{sec6}

In this section, we consider the sequence $\mathbf{g}_{a,b} = (g_{a,b}(n))_{n\in\N}$. It turns out that in each case, one of the subsequences $(g_{a,b}(2n))_{n \in\N}$ and $(g_{a,b}(2n+1))_{n \in\N}$ is eventually constant, while the other eventually satisfies a linear recurrence.

We first state a useful observation, which is an immediate consequence of the recursive definition of $g_{a,b}$.
\begin{lem} \label{lem:g_nondecreasing}
The sequences $(g_{a,b}(2n))_{n\in\N}$ and $(g_{a,b}(2n+1))_{n\in\N}$ are nondecreasing.
\end{lem}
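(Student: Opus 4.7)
The plan is to obtain the monotonicity of the two subsequences as a direct corollary of the termwise inequality $g_{a,b}(n) \geq g_{a,b}(n-2)$ for all $n\geq 2$, which in turn reduces to showing that every term of $\mathbf{g}_{a,b}$ is nonnegative (indeed, strictly positive for $n\geq 0$). Once nonnegativity is established, the recurrence $g_{a,b}(n) = g_{a,b}(n-g_{a,b}(n-1)) + g_{a,b}(n-2)$ expresses $g_{a,b}(n)$ as a sum of two nonnegative quantities, one of which is exactly $g_{a,b}(n-2)$; taking $n=2m$ and $n=2m+1$ separately yields the two claimed monotonicities.

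The proof will be by strong induction on $n$, where the inductive hypothesis is stated as the conjunction of two properties: (i) $g_{a,b}(k)$ is well defined and $g_{a,b}(k)\geq 1$ for every $0\leq k<n$, and (ii) $g_{a,b}(k)\geq g_{a,b}(k-2)$ for every $2\leq k<n$. The base cases $n=0,1$ are immediate from $g_{a,b}(0)=a\geq 1$ and $g_{a,b}(1)=b\geq 1$. For the inductive step one first uses (i) with $k=n-1$ to conclude $g_{a,b}(n-1)\geq 1$, so $n-g_{a,b}(n-1)\leq n-1<n$ and the defining recurrence legitimately computes $g_{a,b}(n)$ from strictly earlier terms (either positive ones, or the convention value $0$ if the index turned out negative). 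Both summands on the right-hand side are then nonnegative, with $g_{a,b}(n-2)\geq 1$ by (i), so $g_{a,b}(n)\geq g_{a,b}(n-2)\geq 1$, completing the induction.

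Finally, specializing to $n=2m$ and $n=2m+1$ (with $m\geq 1$) gives $g_{a,b}(2m)\geq g_{a,b}(2m-2)$ and $g_{a,b}(2m+1)\geq g_{a,b}(2m-1)$, which is precisely the monotonicity of the two subsequences.

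The only potential subtlety, which I would call out explicitly in the write-up rather than a genuine obstacle, is the circularity trap: if one ever had $g_{a,b}(n-1)=0$ the recurrence would self-reference $g_{a,b}(n)$, so the nonnegativity argument must simultaneously guarantee strict positivity on indices $\geq 0$. This is why the induction bundles (i) and (ii) together; separating them would leave a gap. Everything else is bookkeeping.
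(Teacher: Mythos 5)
Your proof is correct and is exactly the argument the paper has in mind: the lemma is stated there as an ``immediate consequence of the recursive definition,'' namely that the recurrence writes $g_{a,b}(n)$ as $g_{a,b}(n-2)$ plus a nonnegative term, with positivity of the terms at nonnegative indices guaranteeing both well-definedness and the sign of that extra summand. (One minor quibble: property (ii) follows from (i) alone and need not be carried in the inductive hypothesis, so the bundling is harmless but not actually necessary.)
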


In the following series of results we succesively consider various cases $a,b$.

\begin{thm}  \label{thm:g_a>=b>=2}
Assume that either $a=b\geq 2$ are even or $a > b \geq 2$. Then for all $n \in \N$ we have
\begin{align*}
g_{a,b}(2n+1) &= b, \\
g_{a,b}(2n+2) &= g_{a,b}(2n+2-b) + g_{a,b}(2n).
\end{align*}
In particular, we have
$$
g_{a,b}(2n) = \begin{cases}
a &\text{if } n < \lfloor b/2 \rfloor, \\
a + \left(n - \lfloor b/2 \rfloor \right)b &\text{if } b \text{ is odd and } n \geq \lfloor b/2 \rfloor, \\
2^n a &\text{if } b = 2.
\end{cases}
$$
\end{thm}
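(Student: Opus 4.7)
The plan is to prove the two displayed recurrences first and then read off the three closed-form cases from the second one. For the recurrences, I would argue by a single strong induction on $n$ which simultaneously proves that
\[ g_{a,b}(2n+1)=b \quad\text{and}\quad g_{a,b}(2n)\geq 2n+2 \]
for every $n\geq 0$. The second inequality is the key technical ingredient: once it is known, the quantity $2n+1-g_{a,b}(2n)$ is strictly negative, so the convention $g_{a,b}(m)=0$ for $m<0$ collapses the defining recursion to $g_{a,b}(2n+1)=g_{a,b}(2n-1)=b$ by the induction hypothesis. Similarly, with $g_{a,b}(2n-1)=b$ in hand, the defining recursion at index $2n$ becomes $g_{a,b}(2n)=g_{a,b}(2n-b)+g_{a,b}(2n-2)$, which is both the statement of the second recurrence (after a shift) and the engine for propagating the growth bound.

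The main obstacle is to carry $g_{a,b}(2n)\geq 2n+2$ through the induction. When $2n-b\geq 0$ this is easy: the term $g_{a,b}(2n-b)$ equals $a$ (if $2n=b$), equals $b$ (if $b$ is odd and $2n-b\geq 1$), or is of the form $g_{a,b}(2k)\geq 2k+2$ (if $b$ is even and $2n-b\geq 2$), and in all cases contributes at least $2$, while $g_{a,b}(2n-2)\geq 2n$ by the induction hypothesis. The delicate case is $2n-b<0$, where $g_{a,b}(2n-b)=0$ and the recursion only yields $g_{a,b}(2n)=g_{a,b}(2n-2)$. Iterating inside this regime gives $g_{a,b}(2n)=g_{a,b}(2)$ for $1\leq n<b/2$, and a direct computation shows $g_{a,b}(2)=2a$ when $b=2$ and $g_{a,b}(2)=a$ when $b\geq 3$. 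The inequality $g_{a,b}(2n)\geq 2n+2$ in this regime therefore requires $a\geq 2\lfloor (b-1)/2\rfloor+2$, which unwinds to $a\geq b+1$ for odd $b\geq 3$ and to $a\geq b$ for even $b\geq 4$; these are precisely the two situations $a>b$ and $a=b$ even assumed in the theorem. This is where the hypothesis enters in an essential way.

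With both recurrences established, the explicit formulas follow by short inductions on $n$ from $g_{a,b}(2n+2)=g_{a,b}(2n+2-b)+g_{a,b}(2n)$. For $n<\lfloor b/2\rfloor$ the shifted argument $2n+2-b$ is negative, so $g_{a,b}(2n+2)=g_{a,b}(2n)$ and the value remains $a$. For odd $b\geq 3$ and $n\geq\lfloor b/2\rfloor$ the shifted argument is a positive odd integer, the shifted term equals $b$ by the first recurrence, and $g_{a,b}(2n+2)=b+g_{a,b}(2n)$ yields the arithmetic progression $a+(n-\lfloor b/2\rfloor)b$. Finally, when $b=2$ the recursion reads $g_{a,2}(2n+2)=2g_{a,2}(2n)$, so $g_{a,2}(2n)=2^n a$ by iteration.
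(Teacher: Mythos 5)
Your proposal is correct and follows essentially the same route as the paper: a simultaneous induction establishing $g_{a,b}(2n+1)=b$ together with $g_{a,b}(2n)\geq 2n+2$ (the paper phrases this as $g_{a,b}(2n)>2n+1$), with the same case split according to whether $2n<b$ or $2n\geq b$, and the same derivation of the closed forms from the resulting recurrence. The only cosmetic difference is that where the paper invokes the monotonicity lemma to bound $g_{a,b}(2n)$ and $g_{a,b}(2n-b)$ from below, you identify these values explicitly, which also makes visible exactly where the hypothesis on $a,b$ is needed.
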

\begin{proof}
Treating $a,b$ as fixed values, for simplicity, we will write $g = g_{a,b}$.
We will prove by induction on $n$ that $g(2n+1)=b$ and $g(2n) > 2n+1$ for all $n \in \N$, from which the first part of the assertion follows. The case $n=0$ is contained in the definition of $g$. For $n \geq 1$ we have
$$
g(2n+1) = g(2n+1 - g(2n)) + g(2n-1) = b,
$$
where we used the induction hypothesis $g(2n) > 2n+1$.

At the same time, when $2n < b$, we get $a > 2n+1$, which implies
\begin{equation} \label{eq:g(2n)_inequality}
g(2n) \geq g(0) = a > 2n+1.
\end{equation}
When $2n \geq b$, we use the induction hypothesis to obtain
$$ g(2n) = g(2n - g(2n-1)) + g(2n-2) > g(2n - b) + 2n-1. $$
Since $g(2n - b) \geq 2$ by Lemma \ref{lem:g_nondecreasing}, our claim is proved.

The expressions for $g(2n)$ in particular cases quickly follow from the general one.
\end{proof}

\begin{thm} \label{thm:g_b>=a>=2}
Assume that either $b=a \geq 3$ are odd or $b > a \geq 2$. Then for all $n \in \N$ we have
\begin{align*}
g_{a,b}(2n) &= a, \\
g_{a,b}(2n+3) &= g_{a,b}(2n+3-a) + g_{a,b}(2n+1).
\end{align*}
In particular, we have
$$
g_{a,b}(2n+1) = \begin{cases}
b  &\text{if } n < \lfloor a/2 \rfloor, \\
b + \left(n +1 - \lfloor a/2 \rfloor \right)a &\text{if } a \text{ is odd and } n \geq \lfloor a/2 \rfloor, \\
2^n b &\text{if } a = 2.
\end{cases}
$$
\end{thm}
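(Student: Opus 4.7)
The plan is to mirror the inductive argument used in the proof of Theorem~\ref{thm:g_a>=b>=2}, simultaneously establishing by strong induction on $n \in \N$ that
\begin{equation*}
g_{a,b}(2n) = a \quad \textrm{and} \quad g_{a,b}(2n+1) > 2n+2.
\end{equation*}
Once both are in hand, substituting $g_{a,b}(2n+2) = a$ into the defining recursion immediately yields $g_{a,b}(2n+3) = g_{a,b}(2n+3-a) + g_{a,b}(2n+1)$. A uniform feature of the hypotheses is that $b \geq 3$ (either $b = a \geq 3$ or $b > a \geq 2$), which will cover the base case $n=0$ and supply the slack needed in several of the strict inequalities below.

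The equality $g_{a,b}(2n) = a$ is obtained exactly as in Theorem~\ref{thm:g_a>=b>=2}: the inductive inequality $g_{a,b}(2n-1) > 2n$ forces the inner index $2n - g_{a,b}(2n-1)$ to be negative, so that value of $g$ vanishes and $g_{a,b}(2n) = g_{a,b}(2n-2) = a$. For the odd-indexed inequality, I would split into two cases. When $2n+1 < a$ the recursion collapses to $g_{a,b}(2n+1) = g_{a,b}(2n-1)$, and descending iteration gives $g_{a,b}(2n+1) = b$; the required bound $b > 2n+2$ then follows from $2n+2 \leq 2\lfloor a/2\rfloor$ combined with $b \geq a$ (strict when $a$ is even, since then $b > a$). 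When $2n+1 \geq a$, the recursion reads $g_{a,b}(2n+1) = g_{a,b}(2n+1-a) + g_{a,b}(2n-1)$; the inductive step supplies $g_{a,b}(2n-1) \geq 2n+1$, while $g_{a,b}(2n+1-a) \geq 2$ because this term equals $a$ when $a$ is odd (by the first identity) and is at least $b \geq 3$ when $a$ is even (by Lemma~\ref{lem:g_nondecreasing}), so $g_{a,b}(2n+1) \geq 2n+3$.

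The closed-form formulas are then read off by iterating the validated recurrence. The first branch $n < \lfloor a/2 \rfloor$ is exactly what the induction just gave. When $a$ is odd and $n \geq (a-1)/2$, the index $2n+3-a$ is even, so the first identity reduces the recurrence to $g_{a,b}(2n+3) = a + g_{a,b}(2n+1)$, and telescoping from $g_{a,b}(a) = a + b$ yields the stated arithmetic progression. When $a=2$, the recurrence becomes $g_{a,b}(2n+3) = 2\,g_{a,b}(2n+1)$, which starting from $g_{a,b}(1) = b$ gives $2^n b$.

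I expect the main technical obstacle to be the verification of the boundary case $2n+1 < a$: the inequality $b > 2n+2$ is tight at the top of that range and forces one to separate by the parity of $a$ and to use the precise form of the hypothesis ($b = a$ in the odd case versus $b > a$ in the even case). Beyond that, the whole argument is a routine strong induction parallel to the one already carried out for Theorem~\ref{thm:g_a>=b>=2}.
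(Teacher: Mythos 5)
Your proposal is correct and takes essentially the same approach as the paper, whose proof of this theorem consists precisely of the remark that one shows $g(2n)=a$ and $g(2n+1)>2n+2$ by the same simultaneous induction as in Theorem \ref{thm:g_a>=b>=2}. Your write-up fills in exactly those details — the base case via $b\geq 3$, the case split at $2n+1<a$ with the parity analysis of the hypothesis, and the telescoping of the resulting recurrence — and all the steps check out.
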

\begin{proof}
The reasoning is based on showing that $g(2n)=a$ and $g(2n+1) > 2n+2$ for all $n \in \N$. This is done in the same way as in the proof of Theorem  \ref{thm:g_a>=b>=2}.
\end{proof}

By comparing the formulas in Theorem \ref{thm:g_a>=b>=2} and \ref{thm:g_b>=a>=2}, we find that for $a > b \geq 2$ the sequences $(g_{a,b}(n))_{n\in\N}$ and $(g_{b,a}(n))_{n\in\N}$ are a shift of one another, where the direction of the shift depends on the parity of $b$. We will, however, prove this corollary in a different way, which exhibits a more intrinsic reason behind this connection.

\begin{cor} \label{cor:g_a,b_shift}
Assume that $a > b \geq 2$. If $b$ is even, then for all $n \in \N$ we have
$$g_{b,a}(n+1) = g_{a,b}(n).$$
If $b$ is odd, then for all $n \in \N$ we have
$$g_{b,a}(n) = g_{a,b}(n+1).$$
\end{cor}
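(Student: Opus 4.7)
The plan is to exhibit the shift as an intrinsic property of the defining recurrence: I would show that the appropriately shifted sequence satisfies the same meta-Fibonacci recurrence as $g_{b,a}$ with the same two initial values, whence the two sequences must coincide. Concretely, for $b$ even I set $u(0)=b$ and $u(n)=g_{a,b}(n-1)$ for $n\geq 1$; for $b$ odd I set $v(n)=g_{a,b}(n+1)$ for $n\geq 0$. The initial values match readily: for $u$ this is immediate from the definition of $g_{a,b}$, while for $v$ one computes $v(1)=g_{a,b}(2)=g_{a,b}(2-b)+a=a$, valid since $b\geq 3$ forces $2-b<0$.

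The main step is to verify the meta-Fibonacci relation $w(n)=w(n-w(n-1))+w(n-2)$ for $n\geq 2$, where $w\in\{u,v\}$. I would substitute the recurrence for $g_{a,b}$ at the shifted index and rewrite each occurrence of $g_{a,b}(k)$ in terms of $w$. Outside one boundary situation the index identities align automatically. The problematic case is when the nested index of $g_{a,b}$ lands on $-1$ (in the $u$-case) or on $0$ (in the $v$-case): there $g_{a,b}$ returns $0$ or $a$, while $w$ returns $b$ or $0$. Translating, one needs to exclude $g_{a,b}(n-2)=n$ in the $b$-even case and $g_{a,b}(n)=n+1$ in the $b$-odd case.

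The main obstacle is ruling out these equalities; the parity of $b$ is what makes it work. On odd indices $g_{a,b}(m)=b$ by Theorem \ref{thm:g_a>=b>=2}, so $b$ has the wrong parity to equal $m+2$ (when $b$ is even) or $m+1$ (when $b$ is odd), which disposes of the odd-index subcase in both settings. For even indices $m=2k$ and $b$ odd, the inequality $g_{a,b}(2k)>2k+1$ obtained inside the proof of Theorem \ref{thm:g_a>=b>=2} directly excludes $g_{a,b}(2k)=2k+1$. The delicate situation is $b$ even, where I need the strengthened bound
$$g_{a,b}(2k)\geq 2k+3,$$
which I would prove by strong induction on $k$. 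For $k<b/2$, Theorem \ref{thm:g_a>=b>=2} gives $g_{a,b}(2k)=a\geq b+1\geq 2k+3$; for $k\geq b/2$, the recurrence $g_{a,b}(2k)=g_{a,b}(2k-b)+g_{a,b}(2k-2)$ combined with the induction hypothesis yields $g_{a,b}(2k)\geq(2k-b+3)+(2k+1)=4k-b+4\geq 2k+3$, using $2k\geq b$. With this stronger inequality in hand the boundary obstruction vanishes, and the shifted sequence coincides with $g_{b,a}$ throughout.
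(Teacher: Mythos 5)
Your proposal is correct and follows essentially the same route as the paper: realize the shift as an identity of sequences satisfying the same recurrence with matching initial data except at the single boundary index ($g_{a,b}(-1)$ versus $g_{b,a}(0)$, resp.\ $g_{b,a}(-1)$ versus $g_{a,b}(0)$), and then show that index is never hit by ruling out $g_{a,b}(m)=m+2$ (resp.\ $g_{a,b}(m)=m+1$) via the parity of $b$ on odd indices and a growth bound on even indices. Your explicit induction giving $g_{a,b}(2k)\geq 2k+3$ for $b$ even is exactly the strengthening of inequality \eqref{eq:g(2n)_inequality} that the paper invokes more tersely.
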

\begin{proof}
We first deal with the case where $b$ is even. We have $g_{b,a}(n+1) = g_{a,b}(n)$ for all integers $n \leq 1$ except for $n = -1$, in which case $g_{b,a}(0)=b$ and $g_{a,b}(-1)=0$. As both considered sequences satisfy the same recurrence relation, it is sufficient to check that the term $g_{a,b}(-1)$ never actually occurs in the computation of $g_{a,b}(n)$ for $n > 1$. In other words, we need to show that
$$ g_{a,b}(n-1) \neq n+1$$
for $n > 1$. If $n = 2m$ is even, this follows from $g_{a,b}(2m-1) = b \equiv 0 \pmod{2}$. If $n=2m+1$ is odd, we use the assumption $a > b$ ($b$ even) to strengthen the inequality \eqref{eq:g(2n)_inequality} to $g_{a,b}(2m) > 2m+2$, and our claim follows.

Along the same lines, reasoning for $b$ odd reduces to showing that $g_{a,b}(n) \neq n+1$ for $n > 1$. The proof in this case is analogous.
\end{proof}

We are left with the case where at least one of $a,b$ is equal to $1$.

\begin{thm} \label{thm:g_a>=3_b=1}
Assume that $a \geq 3$ and $b = 1$. If $a$ is even, then for all $n \in \N$ we have
\begin{align*}
g_{a,1}(2n) &=
\begin{cases}
a+n &\text{if } n < a+1, \\
g_{a,1}(2n-2-a) + g_{a,1}(2n-2) &\text{if } n \geq a+1,
\end{cases} \\
g_{a,1}(2n+1) &=
\begin{cases}
1 &\text{if } n < a-1, \\
a+1 &\text{if } n = a-1, \\
a+2 &\text{if } n > a-1.
\end{cases}
\end{align*}
If $a$ is odd, then for all $n \in \N$ we have
\begin{align*}
g_{a,1}(2n) &=
\begin{cases}
a+n &\text{if } n < a, \\
g_{a,1}(2n-1-a) + g_{a,1}(2n-2) &\text{if } n \geq a,
\end{cases} \\
g_{a,1}(2n+1) &=
\begin{cases}
1 &\text{if } n < a-1, \\
a+1 &\text{if } n \geq  a-1.
\end{cases}
\end{align*}
\end{thm}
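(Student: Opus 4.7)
The proof proceeds by simultaneous strong induction on $n$, verifying the piecewise formulas for $g(2n)$ and $g(2n+1)$ in tandem (write $g = g_{a,1}$ throughout), while carrying along an auxiliary monotonicity bound $g(2n) \geq 2n+2$ valid for $n$ large enough. The first step is to read off the ``phase transitions'' of $g(2n-1)$ from the formulas already being proved: $g(2n-1) = 1$ for $n \leq a-1$, $g(2n-1) = a+1$ at $n = a$, and $g(2n-1)$ stabilizes at $a+2$ (even case) or $a+1$ (odd case) for $n \geq a+1$. Substituting these into the defining recurrence $g(2n) = g(2n - g(2n-1)) + g(2n-2)$ and splitting on the regime of $n$ reproduces the claimed piecewise formulas for $g(2n)$: in the small-$n$ regime the inner argument equals $2n-1$, so $g(2n) = 1 + g(2n-2) = a+n$ by the inductive hypothesis; at the borderline $n = a$ in the even case the subtraction yields $g(a-1) = 1$ and one obtains the last linear value $g(2a) = 1 + (2a-1) = 2a$; and in the stable regime the substitution is precisely the recurrence $g(2n) = g(2n-a-2) + g(2n-2)$ (or $g(2n-a-1) + g(2n-2)$ when $a$ is odd).

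For the odd-indexed part, I would plug the freshly computed value of $g(2n)$ into $g(2n+1) = g(2n+1 - g(2n)) + g(2n-1)$. For $n < a-1$ the quantity $2n+1 - g(2n) = n+1-a$ is negative, the first term vanishes, and $g(2n+1) = g(2n-1) = 1$; at $n = a-1$ that quantity equals $0$ and one gets the jump $g(2n+1) = g(0) + g(2n-1) = a+1$. The stabilization for $n \geq a$ hinges on the auxiliary bound $g(2n) \geq 2n+2$, which forces $2n+1 - g(2n) < 0$ and collapses the formula to $g(2n+1) = g(2n-1)$, giving $a+2$ (even) or $a+1$ (odd). The auxiliary bound itself is proved by a parallel induction based on the already-established recurrence for $g(2n)$, combined with Lemma \ref{lem:g_nondecreasing} and the explicit value $g(a) = 3a/2$ (even) or $g(a-1) = (3a-1)/2$ (odd) to guarantee that the tail summand contributes at least $2$.

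The main obstacle is handling the borderline cases of this auxiliary monotonicity. In the even case, $g(2a) = 2a$ fails the bound $\geq 2a+2$; however, at $n = a$ the value $2a+1 - g(2a) = 1$ is nonnegative and the computation survives via $g(1) = 1$, yielding $g(2a+1) = 1 + (a+1) = a+2$ directly, bypassing the monotonicity argument. The next step $n = a+1$ must then be verified by hand through $g(2a+2) = g(a) + g(2a) = 3a/2 + 2a = 7a/2 \geq 2a+4$, valid for $a \geq 3$; from $n = a+2$ onward the inductive hypothesis $g(2n-2) \geq 2n$ together with $g(2n-a-2) \geq g(a) \geq 2$ gives $g(2n) \geq 2n+2$ straightforwardly. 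The odd case is completely analogous: the base value $g(2a) = (7a-3)/2$ exceeds $2a+2$ whenever $a \geq 3$, and the inductive step uses $g(2n-a-1) \geq g(a-1) \geq 2$.
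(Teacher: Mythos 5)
Your proposal is correct and follows essentially the same route as the paper: a simultaneous induction on the two parities in the initial linear regime, direct computation of the transitional values around $n=a-1,a$ (including the base value $g(2a+2)=7a/2$ in the even case), and then an induction on the bound $g(2n)\geq 2n+2$ (the paper's $g(2n)>2n+1$) combined with the monotonicity of $(g(2n))_{n\in\N}$ to stabilize the odd-indexed subsequence and produce the two-term recurrence for $g(2n)$. The only difference is that you spell out the borderline computations (e.g.\ $g(2a+1)$ via $g(1)=1$) that the paper compresses into ``can be calculated directly.''
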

\begin{proof}
Since the reasoning in both cases is similar, we show that the assertion holds for $a$ even and leave the verification for $a$ odd to the reader. Again, since $a,b$ are fixed, we omit them in the subscript.
To begin, we simultaneously prove both formulas for $g(2n)$ and $g(2n+1)$ for $n < a-1$ by induction. The base case $n=0$ follows from the definition. For $1 \leq n < a-1$ we have
$$ g(2n) = g(2n-g(2n-1)) + g(2n-2) = g(2n-1) + a+n-1 = a+n,$$
as well as
$$ g(2n+1) = g(2n+1-g(2n)) + g(2n-1) = g(n+1-a) + 1 = 1.$$
For $n=a-1,a$ the values $g(2n)$ and $g(2n+1)$ can be calculated directly.

In the case $n \geq a+1$ we check by induction that $g(2n) > 2n+1$, which immediately implies $g(2n+1) = a+2$. The inequality holds for $n =a+1$, as
$$ g(2a+2) = g(2a+2 - g(2a+1)) + g(2a) = g(a) + 2a = \frac{7}{2} a > 2a + 3, $$
by the assumption $a \geq 3$.
For $n \geq a+2$ we have
$$ g(2n) = g(2n-g(2n-1)) + g(2n-2) > g(2n-2-a)+  2n-1 > 2n+1, $$
using the fact that $(g(2n))_{n\in\N}$ does not decrease. The result follows.
\end{proof}

\begin{thm}
Assume that $a = 1$ and $b \geq 3$. Then for all $n \in \N$ we have
\begin{align*}
g_{1,b}(2n) &=
\begin{cases}
1 &\text{if } n < b-1, \\
2 &\text{if } n \geq b-1,
\end{cases} \\
g_{1,b}(2n+1) &=
\begin{cases}
b+n &\text{if } n < b-1, \\
2^{n-b+3}(b-1) &\text{if } n \geq b-1.
\end{cases}
\end{align*}
\end{thm}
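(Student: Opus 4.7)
The plan is to prove both formulas simultaneously by strong induction on $n$, treating $a,b$ as fixed and writing $g = g_{1,b}$. The base case $n=0$ is immediate from the definition ($g(0)=1$ and $g(1)=b$). For the inductive step I would split into three ranges: $1 \leq n < b-1$, $n = b-1$ (the transition point), and $n \geq b$.

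For $1 \leq n < b-1$, the induction hypothesis supplies $g(2n-1) = b+n-1$ and $g(2n-2) = 1$. Because $n < b-1$, we have $2n - g(2n-1) = n+1-b < 0$, so the convention $g(m) = 0$ for $m<0$ gives
\begin{equation*}
g(2n) = g(2n - g(2n-1)) + g(2n-2) = 0 + 1 = 1.
\end{equation*}
Then $2n+1 - g(2n) = 2n$, so $g(2n+1) = g(2n) + g(2n-1) = 1 + (b+n-1) = b+n$, as required. For $n = b-1$, I would compute directly from the hypothesis at $n-1 = b-2$: $g(2b-3) = 2b-2$ and $g(2b-4) = 1$, which yields $g(2b-2) = g(0) + 1 = 2$ and $g(2b-1) = g(2b-3) + (2b-2) = 4(b-1) = 2^{(b-1)-b+3}(b-1)$, matching both formulas at the boundary.

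For $n \geq b$, the induction hypothesis gives $g(2n-1) = 2^{n-b+2}(b-1)$ and $g(2n-2) = 2$. The recurrence then yields
\begin{equation*}
g(2n) = g\bigl(2n - 2^{n-b+2}(b-1)\bigr) + 2, \qquad g(2n+1) = g\bigl(2n+1 - g(2n)\bigr) + g(2n-1),
\end{equation*}
so everything hinges on the key inequality $g(2n-1) > 2n$ for all $n \geq b$. Once this is established, the nested term collapses to $0$, giving $g(2n) = 2$ and therefore $2n+1-g(2n) = 2n-1$, whence $g(2n+1) = 2 g(2n-1) = 2^{n-b+3}(b-1)$, as claimed.

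The only real obstacle is the auxiliary inequality $2^{n-b+2}(b-1) > 2n$ for $n \geq b$. I would prove it by a short separate induction: for $n=b$ it reads $4(b-1) > 2b$, which holds since $b \geq 3$; and doubling the left side in the inductive step grows much faster than the additive increase $+2$ on the right. With this inequality in hand, the main induction closes cleanly, and no further technicalities arise.
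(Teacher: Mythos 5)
Your proof is correct and follows essentially the same route as the paper: induction split at $n = b-1$, with the crux being that $g(2n-1) > 2n$ (equivalently $2^{n-b+2}(b-1) > 2n$) forces the nested term to vanish, giving $g(2n)=2$ and the doubling of the odd-indexed subsequence. The paper only sketches this argument, and your version fills in exactly the details it omits.
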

\begin{proof}
The argument is similar to the proof of Theorem \ref{thm:g_a>=3_b=1}. The values of $g(2n)$ and $g(2n+1)$  for $n<b-1$ can easily be derived by induction.

For $n \geq b-1$ we again use induction to prove that $g(2n)=2$ and $g(2n+1)>2n+2$, from which the result follows.
\end{proof}

To finish the investigation of $\mathbf{g}_{a,b}$, we deal with the remaining cases $(a,b) \in \{(1,1),(1,2),(2,1)\}$.
The proof is very similar to the previous ones and is omitted.

\begin{thm}
For all $n \in \N$ we have
\begin{align*}
g_{1,1}(2n+6) &=  g_{1,1}(2n+4)+g_{1,1}(2n+2),  \\
g_{1,1}(2n+7) &= 4,
\end{align*}
and
\begin{align*}
g_{1,2}(2n+6) &= 4,  \\
g_{1,2}(2n+5) &= g_{1,2}(2n+3)+g_{1,2}(2n+1),
\end{align*}
and
\begin{align*}
g_{2,1}(2n+8) &= g_{2,1}(2n+6)+g_{2,1}(2n+2),  \\
g_{2,1}(2n+7) &= 6.
\end{align*}
\end{thm}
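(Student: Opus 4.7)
The plan is to mimic the two-pronged induction used in Theorems \ref{thm:g_a>=b>=2}, \ref{thm:g_b>=a>=2}, and especially Theorem \ref{thm:g_a>=3_b=1}. In each of the three cases, one subsequence (along one parity class) settles to a constant $c$, while the other (along the opposite parity) satisfies a Fibonacci-like recurrence with shift exactly $c$; both facts are driven by a simple growth bound that forces the nested term $g_{a,b}(m-g_{a,b}(m-1))$ in the definition to land at a negative index and therefore vanish.

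Concretely, for $\mathbf{g}_{1,1}$ I would first compute by hand
$$g_{1,1}(0),\ldots,g_{1,1}(9) = 1,1,2,2,4,3,6,4,10,4,$$
which verifies the claim for small $n$ and provides the base of the induction. Then I would simultaneously prove by induction on $n \geq 0$ the three statements
\begin{align*}
\text{(i)}\ & g_{1,1}(2n+6) = g_{1,1}(2n+4) + g_{1,1}(2n+2), \\
\text{(ii)}\ & g_{1,1}(2n+7) = 4, \\
\text{(iii)}\ & g_{1,1}(2n+6) > 2n+7.
\end{align*}
The inductive step for (ii) uses the defining recurrence together with (iii) at stage $n$: the bound forces $2n+7 - g_{1,1}(2n+6) < 0$, so the nested term is $0$ by the convention on negative indices, and what remains is $g_{1,1}(2n+5) = 4$ by (ii) at the previous stage. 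The inductive step for (i) at stage $n+1$ uses (ii) at stage $n$ to rewrite $g_{1,1}(2n+8 - g_{1,1}(2n+7)) = g_{1,1}(2n+4)$, turning the defining recurrence into the claimed linear one. Finally (iii) at stage $n+1$ follows from (i) at stage $n+1$, from (iii) at stage $n$, and from Lemma \ref{lem:g_nondecreasing}, which guarantees $g_{1,1}(2n+4) \geq g_{1,1}(2) = 2$ and hence $g_{1,1}(2n+8) > (2n+7) + 2 = 2n+9$.

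The sequences $\mathbf{g}_{1,2}$ and $\mathbf{g}_{2,1}$ are handled by an entirely analogous argument, with the parity classes and the constant adjusted accordingly: constant $4$ on even indices (with the Fibonacci-like recurrence on odd indices) for $\mathbf{g}_{1,2}$, and constant $6$ on odd indices (with the Fibonacci-like recurrence on even indices and shift $6$) for $\mathbf{g}_{2,1}$. The shift in the linear recurrence is in each case exactly the value of the constant subsequence, which is why the stated formulas take the form $g_{a,b}(m) = g_{a,b}(m-c) + g_{a,b}(m-2)$.

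The main obstacle is not conceptual but bookkeeping: because these three sequences have longer "transients" than the generic sequences covered by Theorems \ref{thm:g_a>=b>=2} and \ref{thm:g_b>=a>=2}, one has to check that enough initial values are included in the base of the induction so that the shifted arguments $m-4$ or $m-6$ always fall inside the region already covered by the inductive hypothesis. This is precisely why the statements are formulated with offsets $2n+5, 2n+6, 2n+7, 2n+8$ rather than $2n, 2n+1$. Once the base is set up correctly, no new ideas beyond those appearing in Theorem \ref{thm:g_a>=3_b=1} are required, which justifies the author's decision to omit the proof.
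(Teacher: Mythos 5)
Your strategy --- a simultaneous induction proving a Fibonacci-type recurrence, a constant subsequence, and a growth bound that forces the nested term to a negative index --- is exactly the pattern of Theorems \ref{thm:g_a>=b>=2}--\ref{thm:g_a>=3_b=1} that the paper invokes when omitting this proof, and your observation that the shift in each linear recurrence equals the value of the constant subsequence is correct. However, as literally written your induction does not close at its base. Claim (iii), $g_{1,1}(2n+6) > 2n+7$ for all $n \geq 0$, is false at $n=0$, since $g_{1,1}(6)=6$. Consequently the mechanism you give for (ii) at $n=0$ is wrong: $7 - g_{1,1}(6) = 1 \geq 0$, so the nested term does \emph{not} vanish, and instead $g_{1,1}(7) = g_{1,1}(1) + g_{1,1}(5) = 1+3=4$; the first occurrence of the constant value $4$ is a coincidence of the transient, not an instance of the stable pattern. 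The same defect appears in the other two cases: $6 - g_{1,2}(5) = 0$ and $g_{1,2}(6) = g_{1,2}(0)+g_{1,2}(4) = 1+3 = 4$, while $7 - g_{2,1}(6) = 0$ and $g_{2,1}(7) = g_{2,1}(0)+g_{2,1}(5) = 2+4 = 6$. Likewise (i) at $n=0$ holds only because $g_{1,1}(3)=g_{1,1}(2)$, again via a nonvanishing nested term.

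The repair is the one your own table of values already supplies: take the directly computed values as the base for (i) and (ii) at $n=0$ (and at $n=1$), assert the growth bound (iii) only for $n \geq 1$ --- where it does hold, e.g.\ $g_{1,1}(8)=10>9$, $g_{1,2}(7)=10>8$, $g_{2,1}(8)=10>9$ --- and run your inductive step unchanged from there, in the order (i)$_{n+1}$ from (ii)$_n$, then (iii)$_{n+1}$ from (i)$_{n+1}$, (iii)$_n$ and Lemma \ref{lem:g_nondecreasing}, then (ii)$_{n+1}$ from (iii)$_{n+1}$ and (ii)$_n$. With this adjustment (and the analogous parity bookkeeping for $\mathbf{g}_{1,2}$ and $\mathbf{g}_{2,1}$) the argument is complete and coincides with the intended proof.
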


\section{Questions and problems}\label{sec7}

In this final section, we formulate some questions and problems related to the results of this paper which may be of interest.

To begin, it is well-known that a linear combination of linear recurrence sequences is again a linear recurrence sequence. It is natural to ask whether the same property holds for meta-Fibonacci sequences (with values in $\N_{+}$).

\begin{ques} \label{linear_combination}
Assume that $\mathbf{f} =(f(n))_{n \in \N}$ is $\Z$-linear combination of meta-Fibonacci sequences such that $f(n) \in \N_+$ for all $n$. Is $\mathbf{f}$ a meta-Fibonacci sequence?
\end{ques}

In the proof of Theorem \ref{h1bbin} we have shown that the sequences $\mathbf{h}_{1,b}$ are closely related to the binary partition sequence by the relation $ h_{1,b+1}(2n+1) - h_{1,b}(2n+1) = bin(n+1)$ for any $b \geq 2$. An affirmative answer to question \ref{linear_combination} would automatically mean that the binary partition sequence is a subsequence of a meta-Fibonacci sequence.

\begin{ques}
Can the sequence $(bin(n))_{n\in\N}$ be realized as a subsequence of a meta-Fibonacci sequence along an arithmetic progression?
\end{ques}

If this is not the case, we may further generalize the class of sequences considered.

\begin{ques}
Can the sequence $(bin(n))_{n\in\N}$ be realized as a subsequence of a recurrence sequence defined by the recurrence
$$ w(n)=\sum_{i=1}^{p}w(n-w(n-u_{i}))+\sum_{i=1}^{q}w(n-v_{i}), $$
with the initial conditions chosen correctly, where $u_{i}, v_{j}$ are positive integers?
\end{ques}

We now move on to the sequence $(r(n))_{n\in\N} = (h_{1,1}(n)\pmod*{2})_{n\in\N}$. We have shown that this sequence is 2-automatic. It is natural to ask whether a similar property holds for the reduction modulo any positive integer.

\begin{ques}
Let $m$ be a positive integer. Is $(h(n) \pmod*{m})_{n\in\N}$ a $2$-automatic sequence?
\end{ques}

We have performed experimental calculations in Mathematica 13 with the help of the \texttt{IntegerSequences} package by Eric Rowland, available at \begin{center}
    \url{https://ericrowland.github.io/packages.html}
\end{center}
The results indicate that the answer is affirmative when $m$ is a power of $2$. On the other hand, when $m$ is not a power of $2$, the sequence $(h(n) \pmod*{m})_{n\in\N}$ is apparently not $k$-automatic for any $k$. In particular $2$-automaticity would imply that the subsequence $(h(2^n) \pmod*{m})_{n\in\N}$ is periodic, which does not appear to be the case (at least for small values of $m$).


Another interesting problem concerns the $k$-regularity of the sequence of indices $n$ such that the value of $r(n) = h_{1,1}(n) \pmod*{2}$  is fixed.

\begin{ques}
Let ${\bf u}_{i}=(u_{i}(n))$ for $i=0, 1$ be the increasing sequence of non-negative integers such that $r(u_{i}(n))=i$. Is the sequence ${\bf u}_{i}$ is $k$-regular for some $k$?
\end{ques}

According to our numerical computations (using the mentioned package)  both sequences ${\bf u}_{0}, {\bf u}_{1}$ are $2$-regular.

Finally, motivated by the results in this paper, one may try to study the behavior of meta-Fibonacci sequences defined by the general recurrence \eqref{genmeta1}. In particular, it seems interesting to consider a direct generalization of the sequence $\mathbf{h}_{a,b}$.

\begin{prob}
Let $u, v$ be fixed positive integers. Let us put $f(n)=a_{0}$ for $n\leq 0$ and for given initial conditions $f(1), \ldots, f(k-1)\in\N_{+}$
investigate the behavior of the sequence $\mathbf{f} =(f(n))_{n\in\N}$, where
$$
f(n)=f(n-f(n-u))+f(n-v)\quad \text{for}\quad n \geq k.
$$
\end{prob}

As discussed in the introduction, the case $u \geq 1$ and $v=1$ is rather easy. Next, if $u=2, v=1$, then without much of effort one can obtain the following characterization of the sequence $\mathbf{f}$ for $k=2$:
\begin{enumerate}
\item $f(n)=n+b+1$ for $a=1, b=1, 2$ and $n\geq 4$;
\item $f(n)=n+2a_1-2$ for $a=1, b\geq 3$ and $n\geq 4$;
\item $f(n)=2n-b$ for $a=2, b=1, 2$ and $n\geq 4$;
\item $f(n)=an+b-a$ for $a\geq 2, b\geq 3$ and $n\geq 4$.
\end{enumerate}
The case $u=1, v=2, k=2, a_{0}, a_{1}\in\N_{+}$, is investigated in the present paper.

For general $u, v,$ experimental results suggest that it is reasonable to split the sequence $\mathbf{f}$ into $v$ subsequences $(f(vn+i))_{n\in\N}$, where $i = 0,1\ldots,v-1$. Apparently, each of these subsequences eventually grows either linearly or exponentially. However, we expect that in some case a new non-trivial phenomena arises.

For example, in the case $u=1, v=3, k=2$ with $a_{0}=f(1)=1$, numerical computations suggest that  $f(3n+1)=n+29871990902013037527$ for $n\geq 9673$ and $f(3n+2)=n+19162$ for $n\geq 120$. At the same time, assuming that these observations are true, we have that
$$
f(3n)=f(2n-19161)+f(3(n-1))
$$
or equivalently
$$
f(3n)=211+\sum_{i=1}^{n}f(2i-19162)\quad\text{for}\quad n\geq 30.
$$
The growth of the subsequence $(f(3n))_{n\in\N}$ seems to be eventually exponential. A~question arises whether the subsequence $(f(3n))_{n\in\N}$ can be described in terms of known functions.




\bigskip

\noindent Bartosz Sobolewski, Jagiellonian University, Faculty of Mathematics and Computer Science, Institute of Mathematics, {\L}ojasiewicza 6, 30 - 348 Krak\'{o}w, Poland\\
e-mail:\;{\tt  bartosz.sobolewski@uj.edu.pl}
\bigskip

\noindent  Maciej Ulas, Jagiellonian University, Faculty of Mathematics and Computer Science, Institute of Mathematics, {\L}ojasiewicza 6, 30 - 348 Krak\'{o}w, Poland\\
e-mail:\;{\tt  maciej.ulas@uj.edu.pl}
\bigskip

 \end{document}